\documentclass[12pt]{article}
\usepackage{latexsym,amssymb,amsmath,a4wide}
\usepackage[english]{babel}
\usepackage{amsmath}
\usepackage{amsthm}
\usepackage{amsfonts}
\usepackage{amssymb}
\usepackage{array}
\usepackage{tabularx}
\usepackage{graphicx}
\usepackage{tikz}
\usetikzlibrary{intersections, calc, angles}

\usepackage[scaled]{helvet}
\usepackage[metapost,mplabels]{mfpic}

\opengraphsfile{mypics}

\newtheorem{thm}{Theorem}[section]
\newtheorem{cor}[thm]{Corollary}

\newtheorem{prop}[thm]{Proposition}
\theoremstyle{definition}

\theoremstyle{remark}
\newtheorem{rem}[thm]{Remark}

\newtheorem{constr}{Construction}

\author{Olivia Reade Jeans\\ Open University, Milton Keynes, MK7 6AA, U.K.\\ olivia.jeans@open.ac.uk\\ ORCID 0000-0002-7598-9680}

\title{Introducing edge-biregular maps}
\begin{document}

\maketitle

\begin{abstract}
We introduce the concept of alternate-edge-colourings for maps, and study highly symmetric examples of such maps. Edge-biregular maps of type $(k,l)$ occur as smooth normal quotients of a particular index two subgroup of $T_{k,l}$, the full triangle group describing regular plane $(k,l)$-tessellations. The resulting colour-preserving automorphism groups can be generated by four involutions. We explore special cases when the usual four generators are not distinct involutions, with constructions relating these maps to fully regular maps. We classify edge-biregular maps when the supporting surface has non-negative Euler characteristic, and edge-biregular maps on arbitrary surfaces when the colour-preserving automorphism group is isomorphic to a dihedral group.
\end{abstract}

Keywords: Symmetric map; Automorphism; Triangle Group; Regular; 2-orbit map.

\section{Introduction}

A map is an embedding of a connected graph in a surface such that the image of the graph divides the surface into regions which we call faces, while the interior of each face is homeomorphic to an open disc. This paper addresses maps with alternate-edge-colourings and introduces the algebraic theory underlying the most symmetric examples of such maps.

Section 2 introduces the concept of an alternate-edge-colouring for a map, a condition which is equivalent to the medial map being bipartite. We present the monodromy group for this type of map and relate it to the colour-preserving automorphism group. We then study some properties arising from the algebraic background, focussing on the subclass of these maps which have the largest possible colour-preserving automorphism group, maps which we call edge-biregular.

Section 3 addresses special cases of edge-biregular maps, for example when there are semi-edges or boundary components, including constructions relating these to fully regular maps.

A classification of edge-biregular maps supported by surfaces of non-negative Euler characteristic is shown in Section 4.

Section 5 is devoted to a classification of edge-biregular maps on surfaces of negative Euler characteristic where the colour-preserving automorphism group is isomorphic to a dihedral group.

\section{Preliminaries}

\subsection{Alternate-edge-colourings}

A map has an \emph{alternate-edge-colouring} when it is possible to colour the edge set using two colours such that consecutive edges around any given face will be differently coloured and so also two consecutive edges in the cyclic order of edges around any vertex will be assigned different colours. This property is equivalent to the map having a bipartite medial graph. In our diagrams we will use a bold line to denote an edge of one colour (which we call shaded) and dashed lines to indicate edges of the other colour (unshaded). 

We note that a map having an alternate-edge-colouring is also equivalent to being able to define an orientation on the set of corners of a map such that adjacent corners have the opposite orientation. The orientations on the corners can then be defined to be consistent with sweeping the corners always in the same direction with respect to the colouring, for example from the shaded edge to the unshaded edge. In this last sense it is a similar definition to the pseudo-orientable maps introduced by Wilson in \cite{W1}, but we are assigning the orientations to corners of the map rather than vertices. 

An example of a map with an assigned alternate-edge-colouring is shown in Figure \ref{fig1}.

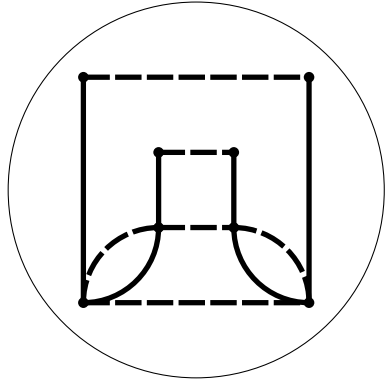
\begin{figure}
\centering
\begin{tikzpicture}
[scale=0.5]
\draw (0,0) circle (5);
\draw [line width=2pt, dash pattern={on 10pt off 2pt}] (-3,-3) -- (3,-3)  (-3,3) -- (3,3) ;
 \draw [line width=2pt] (-3,-3) -- (-3,3)  (3,-3) -- (3,3) ;
 
\draw [line width=2pt, dash pattern={on 10pt off 2pt}] (-1,-1) -- (1,-1)  (-1,1) -- (1,1) ;
 \draw [line width=2pt] (-1,-1) -- (-1,1)  (1,-1) -- (1,1) ; 
 
\draw [line width=2pt, dash pattern={on 10pt off 2pt}] (-1,-1) arc (90:180:2) (3,-3) arc (0:90:2) ;
 \draw [line width=2pt] (-3,-3) arc (270:360:2) (1,-1) arc (180:270:2)   ; 
  \fill (1,1) circle (4pt); 
  \fill (-1,1) circle (4pt); 
    \fill (1,-1) circle (4pt); 
      \fill (-1,-1) circle (4pt); 
   \fill (3,3) circle (4pt); 
      \fill (-3,3) circle (4pt); 
         \fill (3,-3) circle (4pt); 
            \fill (-3,-3) circle (4pt); 
 
 \end{tikzpicture}
\caption{A map on a sphere with an assigned alternate-edge-colouring}
\label{fig1}
\end{figure}

In general, unless stated otherwise, we will assume we are working with maps supported by closed surfaces, that is, maps without boundary components. In this case every vertex of a map with an alternate-edge-colouring must have even valency and all the faces will have even length closed boundary walks. However having even face lengths and valencies is not sufficient for a map to have an alternate-edge-colouring. An example of a map which has only even length faces, and only even degree vertices, is shown in the Figure \ref{fig2}. Any attempt to form an alternate-edge-colouring will result in a contradiction. For example the (single) edge with the double arrow would ``want'' to be coloured with both colours, which is clearly impossible. The same is true for the single arrow edge.

Both examples and non-examples of such maps exist on non-orientable surfaces too. If the left and right edges of the rectangle in Figure \ref{fig2} are identified in the opposite direction from each other then we have a non-orientable map with even valency and even face length for which there is still no alternate-edge-colouring. However, the Klein bottle does support maps with alternate-edge-colourings, as we will see later, in section \ref{classKlein}.

\begin{figure}
\centering
\begin{tikzpicture}[scale=.75]
  \draw [line width=2pt] (0,0) rectangle (12,6);
  \draw [line width=2pt] (2,0)--(2,6);
  \draw [line width=2pt] (10,0)--(10,6);
 \draw  [line width=2pt] (4,0) -- (6,2);
 \draw  [line width=2pt] (8,0) -- (6,2);
 \draw  [line width=2pt] (2,3) -- (6,2);
 \draw  [line width=2pt] (10,3) -- (6,2);
 \draw  [line width=2pt] (2,3) -- (6,4);
 \draw  [line width=2pt] (10,3) -- (6,4);
 \draw  [line width=2pt] (4,6) -- (6,4);
 \draw  [line width=2pt] (8,6) -- (6,4);
  \fill (0,0) circle (4pt);
  \fill (0,3) circle (4pt);
  \fill (0,6) circle (4pt);
  \fill (2,0) circle (4pt);
  \fill (2,3) circle (4pt);
  \fill (2,6) circle (4pt);
  \fill (4,0) circle (4pt);
  \fill (4,6) circle (4pt);
  \fill (6,0) circle (4pt);
  \fill (6,2) circle (4pt);
  \fill (6,4) circle (4pt);
  \fill (6,6) circle (4pt);
  \fill (12,0) circle (4pt);
  \fill (12,3) circle (4pt);
  \fill (12,6) circle (4pt);
  \fill (10,0) circle (4pt);
  \fill (10,3) circle (4pt);
  \fill (10,6) circle (4pt);
  \fill (8,0) circle (4pt);
  \fill (8,6) circle (4pt);
  
  \draw [->, line width=2pt] (0,3.5)--(0,4);
\draw [->, line width=2pt] (12,3.5)--(12,4);

\draw [->>, line width=2pt] (4.5,0)--(5.5,0);
\draw [->>, line width=2pt] (4.5,6)--(5.5,6);
  
 \end{tikzpicture}
\caption{A toroidal map (edges of the rectangle identified in the usual way) with no alternate-edge-colouring}
\label{fig2}
\end{figure}
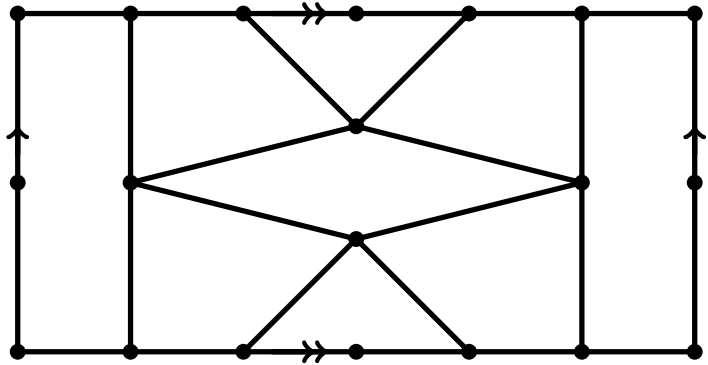

\subsection{The corner-monodromy group}

We define \emph{flags} of a map to be the faces of its barycentric subdivision. We refer to the vertices of the barycentric subdivision as \emph{labels}. Each flag thus has three labels, one for each dimension $0,1$ and $2$, respectively corresponding to the vertex, edge and face of the map with which that flag is associated.

In a map with an assigned alternate-edge-colouring, that is a map which has been given an alternate-edge-colouring, each flag will inherit a natural colouring, shaded or unshaded, according to the colour of the edge with which it associated. The pair of flags which together form a \emph{corner} (that is, the two neighbouring flags from one face which meet in a natural way at a corner of that face) will therefore have one flag of each colour, and the whole map can be decomposed into the set $\mathcal{C}$ of ready-coloured corners, that is corners with an assigned colouring.

The gluing instructions for the elements of the set $\mathcal{C}$ for a map with an assigned alternate-edge-colouring, is in a sense the ready-coloured-corner-monodromy group $G$. The group $G$ acts on the right of $\mathcal{C}$ and is generated by four involutions as follows. Following well-established notation, see \cite{J}, we define the involutions $\mathcal{R}_0, \mathcal{R}_2$ to be the operations which swap every shaded flag with its neighbour respectively along and across a shaded edge. The subscripts in this notation indicate the dimension of the labels of the adjacent shaded flags which are interchanged by the involution. But the action of this group is on the set of two-coloured corners, not flags, so we see $\mathcal{R}_0, \mathcal{R}_2$ swaps every ready-coloured-corner with its unique neighbour respectively along and across a shaded edge, while we define $\mathcal{P}_0, \mathcal{P}_2$ to be the involutions which interchange every ready-coloured-corner with its unique neighbour respectively along and across an unshaded edge.

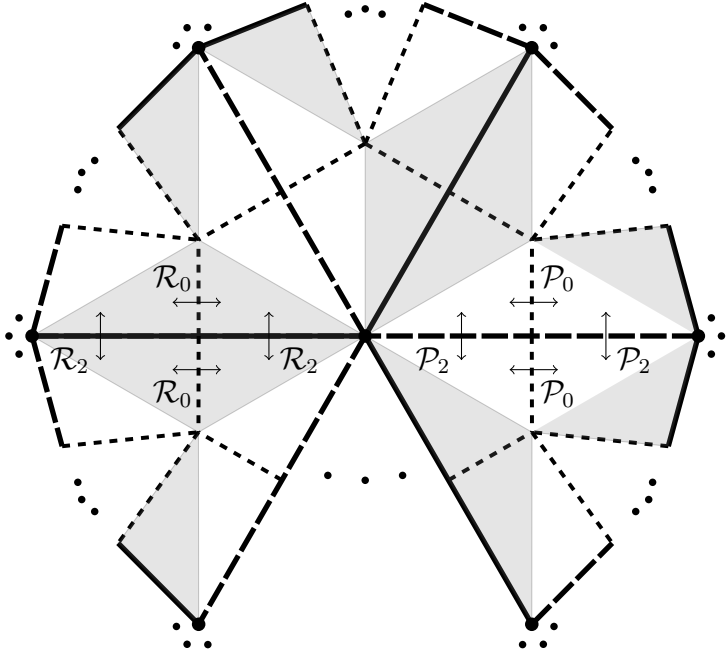
\begin{figure}
\centering
\begin{tikzpicture}
[scale=0.64]
 
  \draw [line width=2pt, dash pattern={on 10pt off 2pt}] (2*3.46,0) -- (0,0) (0,0) -- (120:2*3.46);
 \draw [line width=2pt] (-2*3.46,0) -- (0,0) (0,0) -- (60:2*3.46);
 
\draw [dashed, line width=1.5pt] (90:4) -- (30:4);
\draw [dashed, line width=1.5pt] (90:4) -- (150:4);

\draw [dashed, line width=1.5pt] (0:2*1.73) -- (30:4);

\draw [dashed, line width=1.5pt] (180:2*1.73) -- (150:4);

\draw [dashed, line width=1.5pt] (90:4) -- (80:7);
\draw [dashed, line width=1.5pt] (90:4) -- (100:7);
 \draw [line width=2pt] (120:2*3.46) -- (100:7);
  \draw [line width=2pt, dash pattern={on 10pt off 2pt}] (60:2*3.46) -- (80:7) ;
 \draw [fill=gray, opacity=0.2] (100:7) -- (120:2*3.46) -- (90:4) -- (100:7);

\draw [dashed, line width=1.5pt] (30:4) -- (40:6.7);
\draw [dashed, line width=1.5pt] (150:4) -- (140:6.7);
 \draw  [line width=2pt, dash pattern={on 10pt off 2pt}] (60:2*3.46) -- (40:6.7);
 \draw  [line width=2pt] (120:2*3.46) -- (140:6.7) ;
 \draw[fill=gray, opacity=0.2] (150:4) -- (120:2*3.46) -- (140:6.7) -- (150:4) ;

  \draw [fill=gray, opacity=0.2] (0,0) -- (30:4) -- (60:2*3.46) -- (90:4) -- (0,0);
  
  ***

  \draw [line width=2pt, dash pattern={on 10pt off 2pt}] (-2*3.46,0) -- (0,0) (0,0) -- (-120:2*3.46);
 \draw [line width=2pt] (0,0) -- (-60:2*3.46);
 
\draw [dashed, line width=1.5pt] (-60:3.46) -- (-30:4);
\draw [dashed, line width=1.5pt] (-120:3.46) -- (-150:4);

\draw [dashed, line width=1.5pt] (0:2*1.73) -- (-30:4);
  \draw [fill=gray, opacity=0.2] (0,0) -- (150:4) -- (-180:2*3.46) -- (-150:4) -- (0,0);
\draw [dashed, line width=1.5pt] (-180:2*1.73) -- (-150:4);

\draw [dashed, line width=1.5pt] (-30:4) -- (-40:6.7);
\draw [dashed, line width=1.5pt] (-150:4) -- (-140:6.7);
 \draw  [line width=2pt, dash pattern={on 10pt off 2pt}] (-60:2*3.46) -- (-40:6.7);
 \draw  [line width=2pt] (-120:2*3.46) -- (-140:6.7) ;
 \draw[fill=gray, opacity=0.2] (-150:4) -- (-120:2*3.46) -- (-140:6.7) -- (-150:4) ;

  \draw [fill=gray, opacity=0.2] (0,0) -- (-30:4) -- (-60:2*3.46) -- (0,0);
  
   \draw [line width=2pt, dash pattern={on 10pt off 2pt}] (-2*3.46,0) -- (160:6.7) (-2*3.46,0) -- (-160:6.7);
 \draw [line width=2pt] (2*3.46,0) -- (20:6.7) (2*3.46,0) -- (-20:6.7) ;
 
\draw [dashed, line width=1.5pt] (-20:6.7) -- (330:4);
\draw [dashed, line width=1.5pt] (20:6.7) -- (30:4);

\draw [dashed, line width=1.5pt] (160:6.7) -- (150:4);
\draw [dashed, line width=1.5pt] (200:6.7) -- (210:4);

\draw[fill=gray, opacity=0.2] (0:2*3.46) -- (20:6.7) -- (30:4);
\draw[fill=gray, opacity=0.2] (0:2*3.46) -- (-20:6.7) -- (-30:4);
  
   \fill (0,0) circle (4pt);
   \foreach \x in {0,...,5}
   \fill (60*\x:2*3.46) circle (4pt);

   \draw (270:3) node[circle,fill,inner sep=1pt]{};
  \draw (255:3) node[circle,fill,inner sep=1pt]{};
  \draw (285:3) node[circle,fill,inner sep=1pt]{};
  
     \foreach \x in {-2,...,2}
   \draw (90+60*\x:6.8) node[circle,fill,inner sep=1pt]{}
   (87+60*\x:6.7) node[circle,fill,inner sep=1pt]{}
 (93+60*\x:6.7) node[circle,fill,inner sep=1pt]{};  
  
   \foreach \x in {0,...,5}
   \draw (60*\x:7.4) node[circle,fill,inner sep=1pt]{}
  (3+60*\x:7.2) node[circle,fill,inner sep=1pt]{}
(-3+60*\x:7.2) node[circle,fill,inner sep=1pt]{};

  \draw [<->] (-2, -0.5) node[right] {$\mathcal{R}_2$} -- (-2, 0.5);
  \draw [<->] (2, -0.5) node[left] {$\mathcal{P}_2$} -- (2, 0.5);
  
    \draw [<->] (-4, 0.7) node[above] {$\mathcal{R}_0$} -- (-3, 0.7);
  \draw [<->] (3, 0.7) -- (4, 0.7) node[above] {$\mathcal{P}_0$} ;
 
  \draw [<->] (-5.5, -0.5) node[left] {$\mathcal{R}_2$} -- (-5.5, 0.5);
  \draw [<->] (5, -0.5) node[right] {$\mathcal{P}_2$} -- (5, 0.5);
  
    \draw [<->] (-4, -0.7) node[below] {$\mathcal{R}_0$} -- (-3, -0.7);
  \draw [<->] (3, -0.7) -- (4, -0.7) node[below] {$\mathcal{P}_0$} ;

 \end{tikzpicture}
\caption{The action of $\mathcal{R}_0, \mathcal{R}_2$ and $\mathcal{P}_0,\mathcal{P}_2$ on certain corners of a map with alternate-edge-colouring}
\label{fig3}
\end{figure}

Figure \ref{fig3} shows the action of $\mathcal{R}_0,\mathcal{R}_2,\mathcal{P}_0,\mathcal{P}_2$ on a selection of ready-coloured-corners. Each corner in the diagram is outlined by a bold line, a long-dashed line and two short-dashed lines, and consists of one flag of each colour. The two corners $c$ and $c\mathcal{R}_0$ are thus adjacent along a shaded edge, while $c$ and $c\mathcal{R}_2$ are adjacent across a shaded edge and $c$ is adjacent to $c\mathcal{P}_0$ and $c\mathcal{P}_2$ respectively along and across an unshaded edge.

The group $G = \langle \mathcal{R}_0, \mathcal{R}_2, \mathcal{P}_0, \mathcal{P}_2 \rangle$ for a map with an assigned alternate-edge-colouring which is decomposed into its ready-coloured-corners is then isomorphic to a quotient group of
$\Gamma : = \langle \mathfrak{R}_0, \mathfrak{R}_2 \rangle * \langle \mathfrak{P}_0, \mathfrak{P}_2 \rangle  \cong V_4 * V_4 $
defined by the natural epimorphism $\phi : \Gamma \to G$ where $\phi: \mathfrak{R}_i \to \mathcal{R}_i$ and $\phi: \mathfrak{P}_i \to \mathcal{P}_i$. 
The group $\Gamma$ is the ready-coloured-corner-monodromy group of a universal map for the class of alternate-edge-colourable maps, from which any map with this property can be determined by the natural epimorphism $\phi$.

\subsection{The colour-preserving automorphism group}

We consider the group $H$ of automorphisms acting on the (left of the) set $\mathcal{C}$ which preserve both the structure and the colouring of the map. By this we mean all automorphisms which act such that the images of two neighbouring corners will share the same type of adjacency as their pre-images, that is along or across either a shaded or an unshaded edge. The group $H$ thus consists of all permutations of the set $\mathcal{C}$ which commute with all the gluing instructions in $G$. Hence $H = \{ h \in Sym_\mathcal{C} \; | \; h(c)\mathcal{R}_i = h(c\mathcal{R}_i) \text{ and } h(c)\mathcal{P}_i=h(c\mathcal{P}_i) \text{ for all }c \in \mathcal{C}, i \in \{0,2\}\}$, that is, $H$ is the centraliser of $G$ in the symmetric group acting on the set $\mathcal{C}$.

\subsection{Edge-biregular maps}

We will always assume that all our maps are connected, and hence the structure-preserving condition for our map automorphisms forces the (left) action of $H$ on $\mathcal{C}$ to be semi-regular, that is fixed-point free. The largest possible automorphism group $H$ acting on $\mathcal{C}$, the set of (coloured) corners of a map with assigned alternate-edge-colouring, will be when $H$ acts transitively on the set $\mathcal{C}$. The action being both fixed-point free and transitive means the action is regular and this allows us to identify the elements of the group $H$ with the corners in the set $\mathcal{C}$. We mark an arbitrary corner $C$ which is labelled as the identity element, and refer to the structures incident to this corner as \emph{distinguished}. The automorphisms in $H$ are then generated by the involutions $\langle r_0, r_2, \rho_0, \rho_2\rangle$ which act locally as reflections in the boundaries of this marked corner, (respectively along or across the distinguished shaded edge and along or across the distinguished unshaded edge), while preserving all the adjacency relationships between corners. See Figure \ref{fig4}.

When $H$ acts regularly on the set $\mathcal{C}$ of corners of a map with an assigned alternate-edge-colouring, we say the map is \emph{edge-biregular}. Henceforth, except in some special cases, we will be considering edge-biregular maps with even valency $k$ and even face length $l$ which we describe with the canonical form $(H ; r_0, r_2, \rho_0, \rho_2)$ where the generators act as described above, and $H$ is a group with the partial presentation 
$$H = \langle \; r_0, \; r_2, \;  \rho_0, \; \rho_2 \; \;  |  \; \; r_0^2, \; r_2^2, \;  \rho_0^2, \; \rho_2^2, \; (r_0r_2)^2, \;  (\rho_0\rho_2)^2, \; (r_2 \rho_2)^{k/2}, \;  (r_0\rho_0)^{l/2}, \; \dots \rangle . $$

The exceptional cases, which we address in Section 3, occur when the supporting surface has non-empty boundary components and the resulting edge-biregular maps can then have non-even valency or face length. For example, if in an edge-biregular map a vertex and its incident bold edge, and hence all bold edges, lie on the boundary of the surface then the vertices will necessarily have odd degree, $3$. See Bryant and Singerman \cite{BrSi} for ``Foundations of the theory of maps on surfaces with boundary''.

Each edge-biregular map $M = (H ; r_0, r_2, \rho_0, \rho_2)$ has a \emph{twin} map which is the same as the original map in every respect except the colouring of the edge orbits is switched. Since each generator is associated with one of the colours of the edges, the twin map of $M$ is denoted $W = (H ; \rho_0, \rho_2,  r_0, r_2)$. This is just a matter of relabelling, and does not imply or demand any further relationship between the two structures.

The map automorphisms in $H$ of an edge-biregular map are those which are generated by the four reflections along and across each of two particular adjacent edges of the map, as shown in Figure \ref{fig4}. When considered as acting in the natural way on the flags of the map, the group $H=\langle r_0, r_2, \rho_0, \rho_2\rangle$ partitions the flag set of a given edge-biregular map into two orbits, one containing shaded flags, and the other containing unshaded flags. Thus it is possibly a 2-orbit map, one of those classified by Hubard, Orbani\'{c} and Weiss \cite{HOW}. However an edge-biregular map may not be a 2-orbit map, since there is no structure within our definition which disallows a colour-reversing automorphism of the map. Such an automorphism, if it exists, would fuse the two edge orbits together, and the \textbf{full} automorphism group for this map would then be transitive on flags, making it a fully regular map, and in this case the full automorphism group would then contain $H$ as an index two subgroup. The edge-biregular map $M = (H ; r_0, r_2, \rho_0, \rho_2)$ is thus a fully regular map if and only if there is an involutory automorphism $\psi$ of the group $H$ such that $\psi : r_i \leftrightarrow \rho_i$ for each $i \in \{0,2\}$.

Two edge-biregular maps $M = (H ; r_0, r_2, \rho_0, \rho_2)$ and $M' = (H' ; r'_0, r'_2, \rho'_0, \rho'_2)$ are isomorphic to each other if and only if the mapping $r_i \rightarrow r'_i$ and  $\rho_i \rightarrow \rho'_i$ for $i \in \{0,2\}$ extends to an isomorphism of the group $H$. Thus, an edge-biregular map $M$ is a fully regular map if and only if it is isomorphic to its twin.

Any edge-biregular map which is not fully regular, \textit{is} an example of a $k$-orbit map for $k=2$, see \cite{OPW}. Edge-biregular maps which are not fully regular are the two-orbit maps of type $2_{0,2}$ classified in \cite{HOW}. In \cite{J} Jones also makes special mention of this class of maps as it is the only non-edge-transitive class which arises very naturally in the process of determining the 14 classes of edge-transtitive maps. These types of edge-transitive maps were classified in different terms by Graver and Watkins in \cite{GW} and Wilson in \cite{W2}.

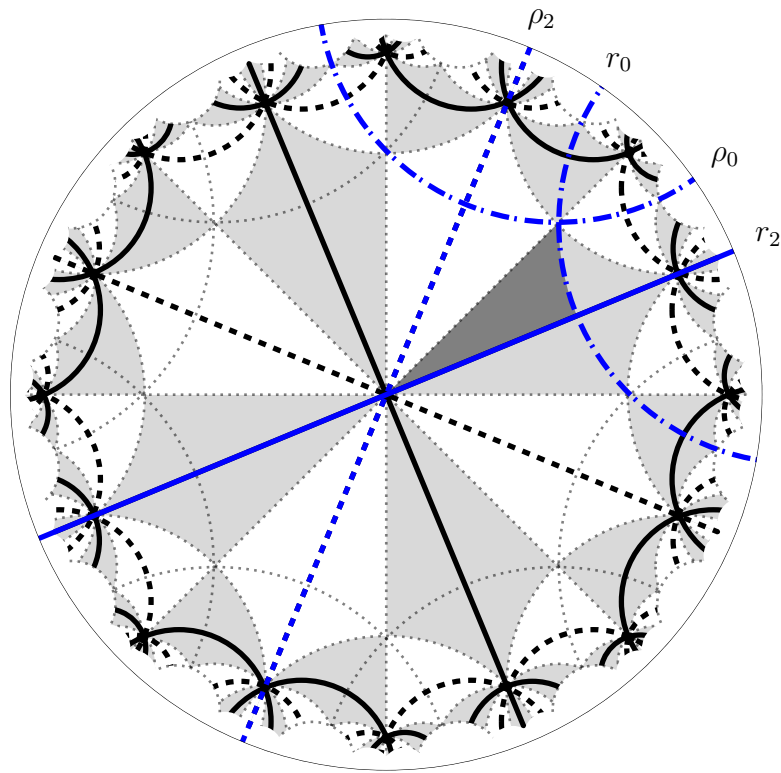
\begin{figure}
\centering
\begin{tikzpicture}[scale=0.5]

\begin{scope}
    \clip (0,0) circle (10);

\begin{scope}
 \clip (0,0)--(0:6.42)--(22.5:8.45)--(45:6.42)--(0,0);
   \fill[fill=gray, opacity=0.3] (0,0) rectangle (45:12) (45:50/6.42+6.42/2) circle (6.42/2-50/6.42) (0:50/6.42+6.42/2) circle (6.42/2-50/6.42);
\end{scope}

\begin{scope}
 \clip (0,0)--(90:6.42)--(112.5:8.45)--(135:6.42)--(0,0);
   \fill[fill=gray, opacity=0.3] (-5,0) rectangle (0,8) (90+45:50/6.42+6.42/2) circle (6.42/2-50/6.42) (90+0:50/6.42+6.42/2) circle (6.42/2-50/6.42);
\end{scope}

\begin{scope}
 \clip (0,0)--(180+0:6.42)--(180+22.5:8.45)--(180+45:6.42)--(0,0);
   \fill[fill=gray, opacity=0.3] (0,0) rectangle (180+45:12) (180+45:50/6.42+6.42/2) circle (6.42/2-50/6.42) (180+0:50/6.42+6.42/2) circle (6.42/2-50/6.42);
\end{scope}

\begin{scope}
 \clip (0,0)--(270+0:6.42)--(270+22.5:8.45)--(270+45:6.42)--(0,0);
   \fill[fill=gray, opacity=0.3] (0,-8) rectangle (5,0) (270+45:50/6.42+6.42/2) circle (6.42/2-50/6.42) (270+0:50/6.42+6.42/2) circle (6.42/2-50/6.42);
\end{scope}
        
 \begin{scope}
  \clip (0,0)--(22.5:5.49)--(45:6.42)--(0,0); 
  \fill[fill=gray] (0,0) rectangle (7,6) (22.5:50/5.49+5.49/2) circle (5.49/2-50/5.49);
\end{scope}

\begin{scope}
 \clip (0,0)--(45:20)--(67.5:20)--(0,0);
   \fill[fill=gray, opacity=0.3]
   (45:50/6.42+6.42/2) circle (6.42/2-50/6.42);
   \fill[fill=white]
     (4*22.5/16+45:50/8.78+8.78/2) circle (8.78/2-50/8.78)
   (13*22.5/16+45:50/8.25+8.25/2) circle (8.25/2-50/8.25);
    \fill[fill=gray, opacity=0.3]
   (90-13*22.5/16:50/8.25+8.25/2) circle (8.25/2-50/8.25)
   (45:50/9.1+9.1/2) circle (9.1/2-50/9.1);
   \fill[fill=white]
   (90:50/6.42+6.42/2) circle (6.42/2-50/6.42)
   (45+22.5/2:50/8.9+8.9/2) circle (8.9/2-50/8.9)
   (-4*22.5/16+45:50/8.78+8.78/2) circle (8.78/2-50/8.78);
\end{scope}

\begin{scope}
 \clip (0,0)--(90+45:20)--(90+67.5:20)--(0,0);
   \fill[fill=gray, opacity=0.3]
   (90+45:50/6.42+6.42/2) circle (6.42/2-50/6.42);
   \fill[fill=white]
     (90+4*22.5/16+45:50/8.78+8.78/2) circle (8.78/2-50/8.78)
   (90+13*22.5/16+45:50/8.25+8.25/2) circle (8.25/2-50/8.25);
    \fill[fill=gray, opacity=0.3]
   (90+90-13*22.5/16:50/8.25+8.25/2) circle (8.25/2-50/8.25)
   (90+45:50/9.1+9.1/2) circle (9.1/2-50/9.1);
   \fill[fill=white]
   (90+90:50/6.42+6.42/2) circle (6.42/2-50/6.42)
   (90+45+22.5/2:50/8.9+8.9/2) circle (8.9/2-50/8.9)
   (90+-4*22.5/16+45:50/8.78+8.78/2) circle (8.78/2-50/8.78);
\end{scope}

\begin{scope}
 \clip (0,0)--(180+45:20)--(180+67.5:20)--(0,0);
   \fill[fill=gray, opacity=0.3]
   (180+45:50/6.42+6.42/2) circle (6.42/2-50/6.42);
   \fill[fill=white]
     (180+4*22.5/16+45:50/8.78+8.78/2) circle (8.78/2-50/8.78)
   (180+13*22.5/16+45:50/8.25+8.25/2) circle (8.25/2-50/8.25);
    \fill[fill=gray, opacity=0.3]
   (180+90-13*22.5/16:50/8.25+8.25/2) circle (8.25/2-50/8.25)
   (180+45:50/9.1+9.1/2) circle (9.1/2-50/9.1);
   \fill[fill=white]
   (180+90:50/6.42+6.42/2) circle (6.42/2-50/6.42)
   (180+45+22.5/2:50/8.9+8.9/2) circle (8.9/2-50/8.9)
   (180+-4*22.5/16+45:50/8.78+8.78/2) circle (8.78/2-50/8.78);
\end{scope}

\begin{scope}
 \clip (0,0)--(270+45:20)--(270+67.5:20)--(0,0);
   \fill[fill=gray, opacity=0.3]
   (270+45:50/6.42+6.42/2) circle (6.42/2-50/6.42);
   \fill[fill=white]
     (270+4*22.5/16+45:50/8.78+8.78/2) circle (8.78/2-50/8.78)
   (270+13*22.5/16+45:50/8.25+8.25/2) circle (8.25/2-50/8.25);
    \fill[fill=gray, opacity=0.3]
   (270+90-13*22.5/16:50/8.25+8.25/2) circle (8.25/2-50/8.25)
   (270+45:50/9.1+9.1/2) circle (9.1/2-50/9.1);
   \fill[fill=white]
   (270+90:50/6.42+6.42/2) circle (6.42/2-50/6.42)
   (270+45+22.5/2:50/8.9+8.9/2) circle (8.9/2-50/8.9)
   (270+-4*22.5/16+45:50/8.78+8.78/2) circle (8.78/2-50/8.78);
\end{scope}

****

\begin{scope}
 \clip (0,0)--(0:20)--(22.5:20)--(0,0);
  \fill[fill=gray, opacity=0.3]
     (4*22.5/16:50/8.78+8.78/2) circle (8.78/2-50/8.78)
   (13*22.5/16:50/8.25+8.25/2) circle (8.25/2-50/8.25);
   \fill[fill=white]
   (45-13*22.5/16:50/8.25+8.25/2) circle (8.25/2-50/8.25)
   (0:50/9.1+9.1/2) circle (9.1/2-50/9.1);
    \fill[fill=gray, opacity=0.3]
   (45:50/6.42+6.42/2) circle (6.42/2-50/6.42)
   (-4*22.5/16:50/8.78+8.78/2) circle (8.78/2-50/8.78);
\end{scope}

\begin{scope}
 \clip (0,0)--(90:20)--(90+22.5:20)--(0,0);
  \fill[fill=gray, opacity=0.3]
     (90+4*22.5/16:50/8.78+8.78/2) circle (8.78/2-50/8.78)
   (90+13*22.5/16:50/8.25+8.25/2) circle (8.25/2-50/8.25);
   \fill[fill=white]
   (90+45-13*22.5/16:50/8.25+8.25/2) circle (8.25/2-50/8.25)
   (90+0:50/9.1+9.1/2) circle (9.1/2-50/9.1);
    \fill[fill=gray, opacity=0.3]
  (90+45:50/6.42+6.42/2) circle (6.42/2-50/6.42)
   (90-4*22.5/16:50/8.78+8.78/2) circle (8.78/2-50/8.78);
\end{scope}

\begin{scope}
 \clip (0,0)--(180:20)--(180+22.5:20)--(0,0);
  \fill[fill=gray, opacity=0.3]
     (180+4*22.5/16:50/8.78+8.78/2) circle (8.78/2-50/8.78)
   (180+13*22.5/16:50/8.25+8.25/2) circle (8.25/2-50/8.25);
   \fill[fill=white]
   (180+45-13*22.5/16:50/8.25+8.25/2) circle (8.25/2-50/8.25)
   (180+0:50/9.1+9.1/2) circle (9.1/2-50/9.1);
   \fill[fill=gray, opacity=0.3]
  (180+45:50/6.42+6.42/2) circle (6.42/2-50/6.42)
   (180-4*22.5/16:50/8.78+8.78/2) circle (8.78/2-50/8.78);
\end{scope}

\begin{scope}
 \clip (0,0)--(270:20)--(270+22.5:20)--(0,0);
  \fill[fill=gray, opacity=0.3]
     (270+4*22.5/16:50/8.78+8.78/2) circle (8.78/2-50/8.78)
   (270+13*22.5/16:50/8.25+8.25/2) circle (8.25/2-50/8.25);
   \fill[fill=white]
   (270+45-13*22.5/16:50/8.25+8.25/2) circle (8.25/2-50/8.25)
   (270+0:50/9.1+9.1/2) circle (9.1/2-50/9.1);
    \fill[fill=gray, opacity=0.3]
   (270+45:50/6.42+6.42/2) circle (6.42/2-50/6.42)
   (270-4*22.5/16:50/8.78+8.78/2) circle (8.78/2-50/8.78);
\end{scope}

****

\begin{scope}
 \clip (0,0)--(0:20)--(-22.5:20)--(0,0);
 \fill[fill=gray, opacity=0.3]
 (0:50/6.42+6.42/2) circle (6.42/2-50/6.42);
  \fill[fill=white]
     (-4*22.5/16:50/8.78+8.78/2) circle (8.78/2-50/8.78)
   (-13*22.5/16:50/8.25+8.25/2) circle (8.25/2-50/8.25);
   \fill[fill=gray, opacity=0.3]
   (-45+13*22.5/16:50/8.25+8.25/2) circle (8.25/2-50/8.25)
   (0:50/9.1+9.1/2) circle (9.1/2-50/9.1);
   \fill[fill=white]
   (-45:50/6.42+6.42/2) circle (6.42/2-50/6.42)
   (4*22.5/16:50/8.78+8.78/2) circle (8.78/2-50/8.78);
\end{scope}

\begin{scope}
 \clip (0,0)--(90:20)--(90-22.5:20)--(0,0);
  \fill[fill=gray, opacity=0.3]
 (90:50/6.42+6.42/2) circle (6.42/2-50/6.42);
  \fill[fill=white]
     (90-4*22.5/16:50/8.78+8.78/2) circle (8.78/2-50/8.78)
   (90-13*22.5/16:50/8.25+8.25/2) circle (8.25/2-50/8.25);
   \fill[fill=gray, opacity=0.3]
   (90-45+13*22.5/16:50/8.25+8.25/2) circle (8.25/2-50/8.25)
   (90+0:50/9.1+9.1/2) circle (9.1/2-50/9.1);
      \fill[fill=white]
   (90-45:50/6.42+6.42/2) circle (6.42/2-50/6.42)
   (90+4*22.5/16:50/8.78+8.78/2) circle (8.78/2-50/8.78);
\end{scope}

\begin{scope}
 \clip (0,0)--(180:20)--(180-22.5:20)--(0,0);
  \fill[fill=gray, opacity=0.3]
 (180:50/6.42+6.42/2) circle (6.42/2-50/6.42);
  \fill[fill=white]
     (180-4*22.5/16:50/8.78+8.78/2) circle (8.78/2-50/8.78)
   (180-13*22.5/16:50/8.25+8.25/2) circle (8.25/2-50/8.25);
   \fill[gray, opacity=0.3]
   (180-45+13*22.5/16:50/8.25+8.25/2) circle (8.25/2-50/8.25)
   (180+0:50/9.1+9.1/2) circle (9.1/2-50/9.1); 
   \fill[fill=white]
   (180-45:50/6.42+6.42/2) circle (6.42/2-50/6.42)
   (180+4*22.5/16:50/8.78+8.78/2) circle (8.78/2-50/8.78);
\end{scope}

\begin{scope}
 \clip (0,0)--(270:20)--(270-22.5:20)--(0,0);
  \fill[fill=gray, opacity=0.3]
 (270:50/6.42+6.42/2) circle (6.42/2-50/6.42);
  \fill[fill=white]
     (270-4*22.5/16:50/8.78+8.78/2) circle (8.78/2-50/8.78)
   (270-13*22.5/16:50/8.25+8.25/2) circle (8.25/2-50/8.25);
   \fill[fill=gray, opacity=0.3]
   (270-45+13*22.5/16:50/8.25+8.25/2) circle (8.25/2-50/8.25)
   (270+0:50/9.1+9.1/2) circle (9.1/2-50/9.1);
      \fill[fill=white]
   (270-45:50/6.42+6.42/2) circle (6.42/2-50/6.42)
   (270+4*22.5/16:50/8.78+8.78/2) circle (8.78/2-50/8.78);
\end{scope}

****

\begin{scope}
 \clip (0,0)--(22.5:20)--(45:20)--(0,0);
 \fill[fill=gray, opacity=0.3]
     (45-4*22.5/16:50/8.78+8.78/2) circle (8.78/2-50/8.78)
   (45-13*22.5/16:50/8.25+8.25/2) circle (8.25/2-50/8.25);
   \fill[fill=white]
   (45-45+13*22.5/16:50/8.25+8.25/2) circle (8.25/2-50/8.25)
   (45:50/9.1+9.1/2) circle (9.1/2-50/9.1);
   \fill[fill=gray, opacity=0.3]
   (45-45:50/6.42+6.42/2) circle (6.42/2-50/6.42)
   (45+4*22.5/16:50/8.78+8.78/2) circle (8.78/2-50/8.78);
\end{scope}

\begin{scope}
 \clip (0,0)--(90+22.5:20)--(90+45:20)--(0,0);
 \fill[fill=gray, opacity=0.3]
     (90+45-4*22.5/16:50/8.78+8.78/2) circle (8.78/2-50/8.78)
   (90+45-13*22.5/16:50/8.25+8.25/2) circle (8.25/2-50/8.25);
   \fill[fill=white]
   (90+45-45+13*22.5/16:50/8.25+8.25/2) circle (8.25/2-50/8.25)
   (90+45:50/9.1+9.1/2) circle (9.1/2-50/9.1);
   \fill[fill=gray, opacity=0.3]
   (90+45-45:50/6.42+6.42/2) circle (6.42/2-50/6.42)
   (90+45+4*22.5/16:50/8.78+8.78/2) circle (8.78/2-50/8.78);
\end{scope}

\begin{scope}
 \clip (0,0)--(180+22.5:20)--(180+45:20)--(0,0);
 \fill[fill=gray, opacity=0.3]
     (180+45-4*22.5/16:50/8.78+8.78/2) circle (8.78/2-50/8.78)
   (180+45-13*22.5/16:50/8.25+8.25/2) circle (8.25/2-50/8.25);
   \fill[fill=white]
   (180+45-45+13*22.5/16:50/8.25+8.25/2) circle (8.25/2-50/8.25)
   (180+45:50/9.1+9.1/2) circle (9.1/2-50/9.1);
   \fill[fill=gray, opacity=0.3]
   (180+45-45:50/6.42+6.42/2) circle (6.42/2-50/6.42)
   (180+45+4*22.5/16:50/8.78+8.78/2) circle (8.78/2-50/8.78);
\end{scope}

\begin{scope}
 \clip (0,0)--(270+22.5:20)--(270+45:20)--(0,0);
 \fill[fill=gray, opacity=0.3]
     (270+45-4*22.5/16:50/8.78+8.78/2) circle (8.78/2-50/8.78)
   (270+45-13*22.5/16:50/8.25+8.25/2) circle (8.25/2-50/8.25);
   \fill[fill=white]
   (270+45-45+13*22.5/16:50/8.25+8.25/2) circle (8.25/2-50/8.25)
   (270+45:50/9.1+9.1/2) circle (9.1/2-50/9.1);
   \fill[fill=gray, opacity=0.3]
   (270+45-45:50/6.42+6.42/2) circle (6.42/2-50/6.42)
   (270+45+4*22.5/16:50/8.78+8.78/2) circle (8.78/2-50/8.78);
\end{scope}

         \draw [line width=2pt] (22.5:10) -- (22.5:-10);
         \draw [dashed, line width=2pt] (67.5:10) -- (67.5:-10);
  \draw [dotted, line width=1pt, opacity=0.5] (90:10) -- (90:-10); 
  \draw [dotted, line width=1pt, opacity=0.5] (45:10) -- (45:-10);
  \draw [dashed, line width=2pt] (90+67.5:10) -- (90+67.5:-10);
  \draw [dotted, line width=1pt, opacity=0.5] (90+45:10) -- (90+45:-10);
  \draw [line width=2pt] (90+22.5:10) -- (90+22.5:-10);
  \draw [dotted, line width=1pt, opacity=0.5] (0:10) -- (0:-10);
  
     \fill (0,0) circle (4pt);

\foreach \x in {0,...,7}
\fill (45*\x+22.5:8.4) circle (4pt);

\foreach \x in {0,...,7}
\fill (45*\x:9.15) circle (4pt);
        
   \foreach \x in {1,...,3}
   \draw[dotted, line width=1pt, opacity=0.5]
   (90*\x+22.5:50/5.49+5.49/2) circle (5.49/2-50/5.49)
    (90*\x+45+22.5:50/5.49+5.49/2) circle (5.49/2-50/5.49);
    
 \foreach \x in {0,...,7}
 \draw[dotted, line width=1pt, opacity=0.5]
  (45*\x:50/9.1+9.1/2) circle (9.1/2-50/9.1);
  
 \foreach \x in {0,...,3}
 \draw[line width=2pt]
  (22.5+90*\x:50/8.4+8.4/2) circle (8.4/2-50/8.4);
   \foreach \x in {0,...,3}
 \draw[dashed, line width=2pt]
  (67.5+90*\x:50/8.4+8.4/2) circle (8.4/2-50/8.4);

 \foreach \x in {0,...,7}
 \draw[dotted, line width=1pt, opacity=0.5]
  (45*\x:50/6.42+6.42/2) circle (6.42/2-50/6.42);

\foreach \x in {0,...,3}
 \draw[dashed, line width=2pt]
 (9*22.5/16+90*\x:50/7.85+7.85/2) circle (7.85/2-50/7.85)
    (45-9*22.5/16+90*\x:50/7.85+7.85/2) circle (7.85/2-50/7.85);
  \foreach \x in {0,...,3}
\draw[line width=2pt]
(45+9*22.5/16+90*\x:50/7.85+7.85/2) circle (7.85/2-50/7.85)
 (-9*22.5/16+90*\x:50/7.85+7.85/2) circle (7.85/2-50/7.85);

\foreach \x in {0,...,3}
 \draw[line width=2pt]
 (2*22.5/16+90*\x:50/9+9/2) circle (9/2-50/9)
 (45-2*22.5/16+90*\x:50/9+9/2) circle (9/2-50/9);
  \foreach \x in {0,...,3}
\draw[dashed, line width=2pt]
 (-2*22.5/16+90*\x:50/9+9/2) circle (9/2-50/9)
 (45+2*22.5/16+90*\x:50/9+9/2) circle (9/2-50/9);

\foreach \x in {0,...,7}
 \draw[dotted, line width=1pt, opacity=0.5]
 (4*22.5/16+45*\x:50/8.78+8.78/2) circle (8.78/2-50/8.78)
 (45-4*22.5/16+45*\x:50/8.78+8.78/2) circle (8.78/2-50/8.78);
 
\foreach \x in {0,...,7}
 \draw[dotted, line width=1pt, opacity=0.5]
 (13*22.5/16+45*\x:50/8.25+8.25/2) circle (8.25/2-50/8.25)
 (45-13*22.5/16+45*\x:50/8.25+8.25/2) circle (8.25/2-50/8.25);

  \foreach \x in {0,...,15}
  \fill[fill=white]
  (22.5*\x+22.5/2:50/8.9+8.9/2) circle (8.9/2-50/8.9);
  \foreach \x in {0,...,15}
 \draw[dotted, line width=1pt, opacity=0.5]
   (22.5*\x+22.5/2:50/8.9+8.9/2) circle (8.9/2-50/8.9);
   
   \foreach \x in {0,...,15}
  \fill[fill=white]
  (22.5+22.5*\x:50/9.6+9.6/2) circle (9.6/2-50/9.6)
(22.5*\x+2*22.5/16:50/9.4+9.4/2) circle (9.4/2-50/9.4)   
   (22.5*\x-2*22.5/16:50/9.4+9.4/2) circle (9.4/2-50/9.4);
 
    *************

\draw[white, line width=5] (0,0) circle (10);
        
         \draw [blue, line width=2pt] (22.5:10) -- (22.5:-10);
         \draw [blue, dashed, line width=2pt] (67.5:10) -- (67.5:-10);
         \draw[blue, dash pattern={on 7pt off 2pt on 1pt off 3pt}, line width=2pt]
  (22.5:50/5.49+5.49/2) circle (5.49/2-50/5.49);
           \draw[blue, dash pattern={on 7pt off 2pt on 1pt off 3pt}, line width=2pt]
  (67.5:50/5.49+5.49/2) circle (5.49/2-50/5.49);
     \draw (0,0) circle (10);

\end{scope}    
 
        \draw (55:10.8) node{$r_0$};
        \draw (22.5:11) node {$r_2$};
        \draw (35:11) node {$\rho_0$};
        \draw (67.5:10.8) node {$\rho_2$};
        
\end{tikzpicture}
\caption{The hyperbolic lines of reflection for the generating automorphisms $r_0, r_2, \rho_0$ and $\rho_2$, shown on part of the infinite edge-biregular map (hyperbolic tessellation) of type $(8,4)$. The marked corner, corresponding to the identity element of $H$, is shaded darker than the others.}

\label{fig4}
\end{figure}

The choice of notation indicates some connection between the groups $G$ and $H$, and indeed there is a very natural relationship between the two. The groups $H$ and $G$ are regular permutation representations on the set of corners of the coloured map acting respectively on the left and right. The embedded Cayley map illustrates this in Figure \ref{fig5}. This is a Cayley graph for the automorphism group $H$, so each of the dark vertices represents an element in $H$, naturally identified with the corners of the edge-biregular map, and the coloured lines are the generating automorphisms $r_0,r_2,\rho_0$ and $\rho_2$. Take the vertex in the corner marked $C$ as the identity element of the group $H$, and consider the element (for example) $h =  \rho_2 r_2 r_0 \rho_0 \in H$. Now, $H$ acts on the left, so $h$ corresponds to the flag which is the image of $C$ after the reflections $\rho_0, r_0, r_2$ and $\rho_2$ are applied in that order. Applying these automorphisms in turn can be a somewhat laborious exercise.

However, one arrives at the same corner as when going from $C$ to $C\mathcal{P}_2\mathcal{R}_2 \mathcal{R}_0\mathcal{P}_0$. This is no coincidence. Notice that the coloured lines, \textit{when looked at in the context of the underlying map shown in gray}, indicate the gluing instructions $ \mathcal{R}_0, \mathcal{R}_2, \mathcal{P}_0, \mathcal{P}_2 $ between the corners. An automorphism $hx \in H$ for some $x \in \{ \rho_2, r_2, r_0, \rho_0 \}$ acts on the marked corner $C$ as follows. Being an automorphism, $x$ commutes with the monodromy group, and $C$ is our marked corner, so $x(C)\mathcal{X}=x(C\mathcal{X})=C$ where $\mathcal{X}$ is the corresponding capital of $x$. This implies that $x(C) = C\mathcal{X}$ and hence $hx(C) = h(C)\mathcal{X}$. By induction we can conclude that, where the corner $C$ is identified with the identity of $H$, the automorphism $h=x_1x_2...x_n \in H$ is identified with the flag $C\mathcal{X}_1\mathcal{X}_2...\mathcal{X}_n$.

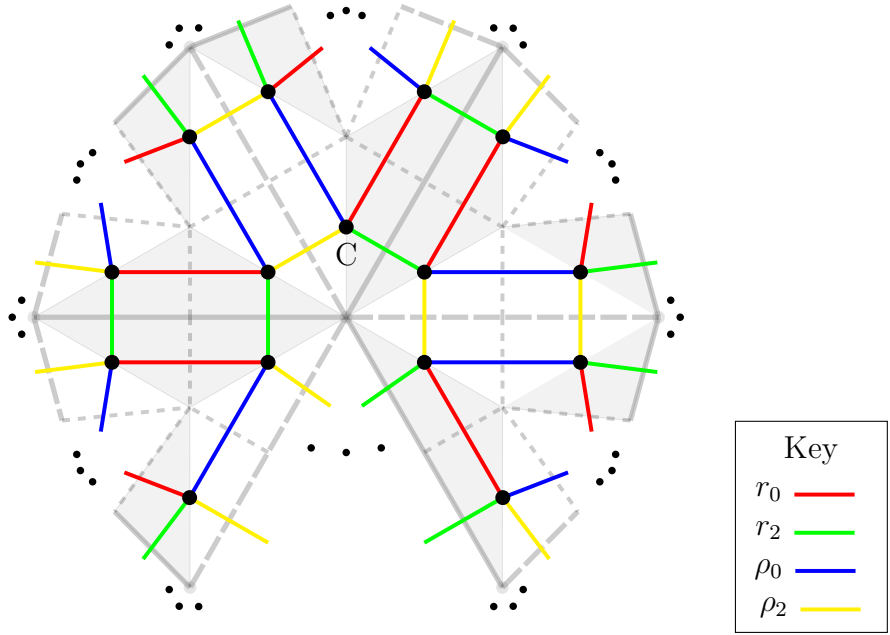
\begin{figure}
\centering
\begin{tikzpicture}
[scale=0.6]
\clip (-12,-7)--(-12,7)--(12,7)--(12,-7) -- (-12,-7);
 
  \draw [line width=2pt, dash pattern={on 10pt off 2pt}, opacity=0.2] (2*3.46,0) -- (0,0) (0,0) -- (120:2*3.46);
 \draw [line width=2pt, opacity=0.2] (-2*3.46,0) -- (0,0) (0,0) -- (60:2*3.46);
 
\draw [dashed, line width=1.5pt, opacity=0.2] (90:4) -- (30:4);
\draw [dashed, line width=1.5pt, opacity=0.2] (90:4) -- (150:4);

\draw [dashed, line width=1.5pt, opacity=0.2] (0:2*1.73) -- (30:4);

\draw [dashed, line width=1.5pt, opacity=0.2] (180:2*1.73) -- (150:4);

\draw [dashed, line width=1.5pt, opacity=0.2] (90:4) -- (80:7);
\draw [dashed, line width=1.5pt, opacity=0.2] (90:4) -- (100:7);
 \draw [line width=2pt, opacity=0.2] (120:2*3.46) -- (100:7);
  \draw [line width=2pt, dash pattern={on 10pt off 2pt}, opacity=0.2] (60:2*3.46) -- (80:7) ;
 \draw [fill=gray, opacity=0.1] (100:7) -- (120:2*3.46) -- (90:4) -- (100:7);

\draw [dashed, line width=1.5pt, opacity=0.2] (30:4) -- (40:6.7);
\draw [dashed, line width=1.5pt, opacity=0.2] (150:4) -- (140:6.7);
 \draw  [line width=2pt, dash pattern={on 10pt off 2pt}, opacity=0.2] (60:2*3.46) -- (40:6.7);
 \draw  [line width=2pt, opacity=0.2] (120:2*3.46) -- (140:6.7) ;
 \draw[fill=gray, opacity=0.1] (150:4) -- (120:2*3.46) -- (140:6.7) -- (150:4) ;

  \draw [fill=gray, opacity=0.1] (0,0) -- (30:4) -- (60:2*3.46) -- (90:4) -- (0,0);
  
  ***

  \draw [line width=2pt, dash pattern={on 10pt off 2pt}, opacity=0.2]  (0,0) -- (-120:2*3.46);
 \draw [line width=2pt, opacity=0.2] (0,0) -- (-60:2*3.46);
 
\draw [dashed, line width=1.5pt, opacity=0.2] (-60:3.46) -- (-30:4);
\draw [dashed, line width=1.5pt, opacity=0.2] (-120:3.46) -- (-150:4);

\draw [dashed, line width=1.5pt, opacity=0.2] (0:2*1.73) -- (-30:4);
  \draw [fill=gray, opacity=0.1] (0,0) -- (150:4) -- (-180:2*3.46) -- (-150:4) -- (0,0);
\draw [dashed, line width=1.5pt, opacity=0.2] (-180:2*1.73) -- (-150:4);

\draw [dashed, line width=1.5pt, opacity=0.2] (-30:4) -- (-40:6.7);
\draw [dashed, line width=1.5pt, opacity=0.2] (-150:4) -- (-140:6.7);
 \draw  [line width=2pt, dash pattern={on 10pt off 2pt}, opacity=0.2] (-60:2*3.46) -- (-40:6.7);
 \draw  [line width=2pt, opacity=0.2] (-120:2*3.46) -- (-140:6.7) ;
 \draw[fill=gray, opacity=0.1] (-150:4) -- (-120:2*3.46) -- (-140:6.7) -- (-150:4) ;

  \draw [fill=gray, opacity=0.1] (0,0) -- (-30:4) -- (-60:2*3.46) -- (0,0);
  
   \draw [line width=2pt, dash pattern={on 10pt off 2pt}, opacity=0.2] (-2*3.46,0) -- (160:6.7) (-2*3.46,0) -- (-160:6.7);
 \draw [line width=2pt, opacity=0.2] (2*3.46,0) -- (20:6.7) (2*3.46,0) -- (-20:6.7) ;
 
\draw [dashed, line width=1.5pt, opacity=0.2] (-20:6.7) -- (330:4);
\draw [dashed, line width=1.5pt, opacity=0.2] (20:6.7) -- (30:4);

\draw [dashed, line width=1.5pt, opacity=0.2] (160:6.7) -- (150:4);
\draw [dashed, line width=1.5pt, opacity=0.2] (200:6.7) -- (210:4);

\draw[fill=gray, opacity=0.1] (0:2*3.46) -- (20:6.7) -- (30:4);
\draw[fill=gray, opacity=0.1] (0:2*3.46) -- (-20:6.7) -- (-30:4);
  
   \fill[gray, opacity=0.2] (0,0) circle (4pt);
   \foreach \x in {0,...,5}
   \fill[gray, opacity=0.2] (60*\x:2*3.46) circle (4pt);

   \draw (270:3) node[circle,fill,inner sep=1pt]{};
  \draw (255:3) node[circle,fill,inner sep=1pt]{};
  \draw (285:3) node[circle,fill,inner sep=1pt]{};
  
     \foreach \x in {-2,...,2}
   \draw (90+60*\x:6.8) node[circle,fill,inner sep=1pt]{}
   (87+60*\x:6.7) node[circle,fill,inner sep=1pt]{}
 (93+60*\x:6.7) node[circle,fill,inner sep=1pt]{};  
  
   \foreach \x in {0,...,5}
   \draw (60*\x:7.4) node[circle,fill,inner sep=1pt]{}
  (3+60*\x:7.2) node[circle,fill,inner sep=1pt]{}
(-3+60*\x:7.2) node[circle,fill,inner sep=1pt]{};

 \draw (0,1.4) node {C};
 
\draw [red, line width=1.5pt] (30:2) -- (3.47,4); 
\draw [red, line width=1.5pt] (-30:2) -- (3.47,-4); 
\draw [red, line width=1.5pt] (0,2) -- (1.73,5); 

\draw [red, line width=1.5pt] (150:2) -- (-3*1.73,1); 
\draw [red, line width=1.5pt] (210:2) -- (-3*1.73,-1); 

\draw [blue, line width=1.5pt] (150:2) -- (-3.47,4); 
\draw [blue, line width=1.5pt] (210:2) -- (-3.47,-4); 
\draw [blue, line width=1.5pt] (90:2) -- (-1.73,5); 

\draw [blue, line width=1.5pt] (30:2) -- (3*1.73,1); 
\draw [blue, line width=1.5pt] (-30:2) -- (3*1.73,-1); 

\draw [green, line width=1.5pt] (1.73,5) -- (3.47,4); 
\draw [green, line width=1.5pt] (-30:2) -- (-80:2); 
\draw [green, line width=1.5pt] (3.47,-4) -- (1.73,-5); 
\draw [green, line width=1.5pt] (90:2) -- (30:2); 

\draw [green, line width=1.5pt] (150:2) -- (210:2); 
\draw [green, line width=1.5pt] (-3*1.73,1) -- (-3*1.73,-1); 

\draw [yellow, line width=1.5pt] (-1.73,5) -- (-3.47,4); 
\draw [yellow, line width=1.5pt] (210:2) -- (-100:2); 
\draw [yellow, line width=1.5pt] (-1.73,-5) -- (-3.47,-4); 
\draw [yellow, line width=1.5pt] (90:2) -- (150:2); 

\draw [yellow, line width=1.5pt] (3*1.73,-1) -- (3*1.73,1); 
\draw [yellow, line width=1.5pt] (-30:2) -- (30:2);

 \draw [yellow, line width=1.5pt] (3.47, 4) -- (50:7);
  \draw [yellow, line width=1.5pt] (3.47, -4) -- (-50:7);
  \draw [yellow, line width=1.5pt] (-3*1.73, 1) -- (170:7);
  \draw [yellow, line width=1.5pt] (-3*1.73, -1) -- (190:7);
  \draw [yellow, line width=1.5pt] (70:7) -- (1.73,5); 
  
   \draw [blue, line width=1.5pt] (3.47, 4) -- (35:6);
  \draw [blue, line width=1.5pt] (3.47, -4) -- (-35:6);
  \draw [blue, line width=1.5pt] (-3*1.73, 1) -- (155:6);
  \draw [blue, line width=1.5pt] (-3*1.73, -1) -- (205:6);
  \draw [blue, line width=1.5pt] (85:6) -- (1.73,5); 
 
 \draw [green, line width=1.5pt] (-3.47, 4) -- (130:7);
  \draw [green, line width=1.5pt] (-3.47, -4) -- (-130:7);
  \draw [green, line width=1.5pt] (3*1.73, 1) -- (10:7);
  \draw [green, line width=1.5pt] (3*1.73, -1) -- (-10:7);
  \draw [green, line width=1.5pt] (110:7) -- (-1.73,5); 
  
   \draw [red, line width=1.5pt] (-3.47, 4) -- (180-35:6);
  \draw [red, line width=1.5pt] (-3.47, -4) -- (180+35:6);
  \draw [red, line width=1.5pt] (3*1.73, 1) -- (25:6);
  \draw [red, line width=1.5pt] (3*1.73, -1) -- (-25:6);
  \draw [red, line width=1.5pt] (95:6) -- (-1.73,5);  
 
 \foreach \x in {0,...,4}
\draw (60*\x -30:2) node[circle,fill,inner sep=2pt]{}; 

\draw (3.47, 4) node[circle,fill,inner sep=2pt]{}; 
\draw (3.47, -4) node[circle,fill,inner sep=2pt]{}; 
\draw (-3.47, 4) node[circle,fill,inner sep=2pt]{}; 
\draw (-3.47, -4) node[circle,fill,inner sep=2pt]{}; 
\draw (-1.73,5) node[circle,fill,inner sep=2pt]{}; 
\draw (1.73,5) node[circle,fill,inner sep=2pt]{}; 
\draw (3*1.73, 1) node[circle,fill,inner sep=2pt]{};  
\draw (3*1.73, -1) node[circle,fill,inner sep=2pt]{}; 
\draw (-3*1.73, 1) node[circle,fill,inner sep=2pt]{}; 
\draw (-3*1.73, -1) node[circle,fill,inner sep=2pt]{}; 
    
    \node[draw, text width=0.1\linewidth,inner sep=2mm,align=center,
      above left] at (current bounding box.south east)
    {Key
    
$r_0$ 
\begin{tikz} \draw [red, line width=1.5pt]  (0,0)--(0.8,0); \end{tikz} 

$r_2$ 
\begin{tikz} \draw [green, line width=1.5pt](0,0)--(0.8,0); \end{tikz}

$\rho_0$ 
\begin{tikz} \draw [blue, line width=1.5pt]  (0,0)--(0.8,0); \end{tikz}     

$\rho_2$ 
\begin{tikz} \draw [yellow, line width=1.5pt]  (0,0)--(0.8,0) ;\end{tikz}
};
 \end{tikzpicture}
\caption{Part of the Cayley map}
\label{fig5}
\end{figure}

\subsection{Algebraic context}

Maps of a given type $(k,l)$ can be obtained as quotients of the universal map of the same type. This universal map consists of the regular tessellation of $l$-gons, $k$ of which meet at each corner, on a simply connected surface. The corresponding $(k,l)$ tessellation is described as hyperbolic, Euclidean or spherical, the name describing the geometry of the underlying simply connected surface.

The universal map of type $(k,l)$ is known \cite{S} to have automorphism group as follows:

$$ T_{k,l}  \; =  \; \langle  \; R_0, R_2, R_1 \; |  \;  R_0^2, R_2^2, R_1^2, (R_0 R_2)^2, (R_1R_2)^k, (R_0R_1)^l  \; \rangle $$

\noindent
The group $T_{k,l}$ is called the full triangle group of type $(k,l)$ and it is finite only for maps of the spherical type, where $1/k+1/l >1/2$. Quotients of the full triangle group by torsion-free normal subgroups of finite index give rise to finite fully regular maps, that is maps where the automorphism group acts transitively on the finite set of flags.

Where $k$ and $l$ are both even, there are seven index two subgroups of $T_{k,l}$, each of which is a source of maps with a given property. These index two subgroups can be identified by listing which of the three generators are included, and we use bar notation to indicate when the corresponding element is not contained in the subgroup. Well-known, and widely studied are the rotary maps which occur as quotients of the index two subgroup in which none of the generators are included, that is $T^+_{k,l} = \langle R_0R_1, R_2R_1 \rangle = \langle \bar{R_0}, \bar{R_1}, \bar{R_2} \rangle$.  Recently Breda, Catalano and \v{S}ir\'{a}\v{n} \cite{BCS} have partially classified the bi-rotary maps which come from quotients of $\langle R_0, \bar{R_2}, \bar{R_1} \rangle$. The maps which are the focus of this paper, that is edge-biregular maps of type $(k,l)$, arise as quotients of the index two subgroup $\langle R_0, R_2, \bar{R_1} \rangle$.

We note that edge-biregular maps can occur as a special case of the 2-regular hypermaps in the ``edge-bipartite'' class as described in Duarte's thesis \cite{D}, where there is a classification for some surfaces of small Euler characteristic. In this paper we allow for maps with semi-edges, making our work more general in this sense.

\section{Special cases of edge-biregular maps}

An edge-biregular map is usually described in terms of the the four generating automorphisms $r_0,r_2,\rho_0, \rho_2$, but there are degenerate cases where either one or more of these is ``missing'' (contributing nothing to the group, and indicating a boundary to the surface) or when the four involutions are not distinct. These possibilities are explored in this section.

\subsection{Edge-biregular maps with semi-edges}
A way in which the four involutions might not be distinct would be if $r_0 = r_2$ and/or $\rho_0 = \rho_2$. This would indicate that the orbit of edges of the corresponding colour consists of semi-edges.

If both orbits of edges are semi-edges then we have a semistar. These maps have only one vertex, and, assuming there are more than two corners in the map, the group $H = \langle r_2, \rho_2 \rangle$ will be dihedral. The supporting surface, if without boundary, must be a sphere, and these even valency semistar spherical maps are both edge-biregular and also fully regular. Examples of semistar edge-biregular maps also exist on a disc, and we explore surfaces with boundary components in the following section.

Henceforth we will assume, up to twinness, that at least one of the orbits of edges does not consist of semi-edges. When there are no semi-edges in an edge-biregular map we say it is a \emph{proper} edge-biregular map.

\subsection{Edge-biregular maps with boundary components}

If a map is on a surface which is not closed, that is when the surface has non-empty boundary components, then at least one of the edges of a corner region will lie on the boundary of the supporting surface. The resulting maps are reminiscent of maps with holes, see page 109 in \cite{CM}, and holey maps, see \cite{AGS}. Figure \ref{fig6} shows a corner region, and ways that corner regions can meet the boundary of a surface. Of course, a corner region could have up to four of its edges along the boundary, see Figures \ref{fig7}, \ref{fig8}. For clarity of the diagrams, we have drawn the boundary lines extending beyond the corner regions. In actuality, due to the map being on a 2-manifold with boundary (where every point on a boundary has a neighbourhood homeomorphic to a half-disc), wherever two boundaries apparently cross, the boundary will in fact be one continuous boundary around this part of the corner region.

\begin{figure}
\centering
\begin{tikzpicture}
  \draw [fill=gray, opacity=0.2] (-2.5,1.8) -- (-2.5,0) -- (-1,1.8) -- (-2.5,1.8);
  \draw [opacity=0.1] (-1,0) rectangle (-2.5,1.8);
  \draw [dashed, line width=2pt] (-1,0) -- (-2.5,0);
  \fill (-2.5,0) circle (3pt);
  \draw [line width=2pt] (-2.5,1.8) -- (-2.5,0);
  
    \draw [fill=gray, opacity=0.2] (0,1.8) -- (0,0) -- (1.5,1.8) -- (0,1.8);
  \draw [opacity=0.1] (1.5,0) rectangle (0,1.8);
  \draw [dashed, line width=2pt] (1.5,0) -- (0,0);
  \fill (0,0) circle (3pt);
  \draw [line width=2pt] (0,1.8) -- (0,0);
  
  \draw [dashed, dash pattern={on 10pt off 2pt}] (-0.5,0) --(10,0);

      \draw [fill=gray, opacity=0.2] (2.8+4,1.8) -- (2.8+2.5,0) -- (2.8+4,0) -- (2.8+4,1.8);
  \draw [opacity=0.1] (2.8+2.5+1.5,0) rectangle (2.8+2.5+0,1.8);
  \draw [dashed, line width=2pt] (2.8+2.5+1.5,1.8) -- (2.8+2.5+0,1.8);
  \fill (2.8+4,1.8) circle (3pt);
  \draw [line width=2pt] (2.8+4,1.8) -- (2.8+4,0);
  
        \draw [fill=gray, opacity=0.2] (2.5,1.5) -- (2.5,0) -- (-2.5+6.8,0) -- (-2.5+5,1.5);
  \draw [opacity=0.1] (-2.5+5+1.8,0) rectangle (-2.5+5+0,1.5);
  \draw [dashed, line width=2pt] (-2.5+6.8,1.5) -- (-2.5+6.8,0);
  \fill (-2.5+6.8,0) circle (3pt);
  \draw [line width=2pt] (-2.5+5+1.8,0) -- (-2.5+5+0,0);
  
          \draw [fill=gray, opacity=0.2] (2.8+6.8,1.5) -- (2.8+5,1.5) -- (2.8+6.8,0) -- (2.8+6.8,1.5);
  \draw [opacity=0.1] (2.8+5+1.8,0) rectangle (2.8+5+0,1.5);
  \draw [dashed, line width=2pt] (2.8+5+0,1.5) -- (2.8+5+0,0);
  \fill (2.8+5+0,1.5) circle (3pt);
  \draw [line width=2pt] (2.8+5+1.8,1.5) -- (2.8+5+0,1.5);
  
 \end{tikzpicture}
\caption{A corner, and corners meeting the surface boundary, types (a), (b), (c), and (d)}
\label{fig6}
\end{figure}
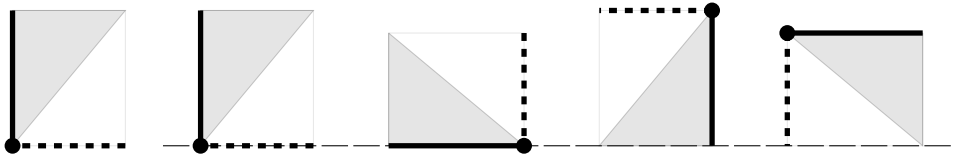

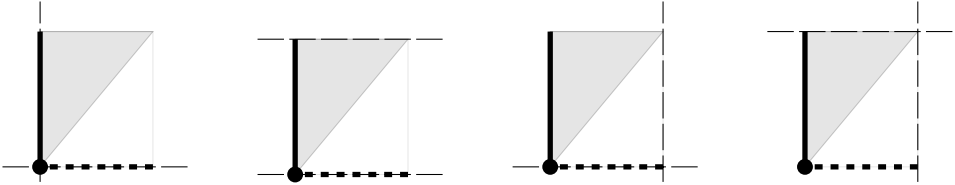
\begin{figure}
\centering

  \begin{tikzpicture}
    \draw [fill=gray, opacity=0.2] (0,1.8) -- (0,0) -- (1.5,1.8) -- (0,1.8);
  \draw [opacity=0.1] (1.5,0) rectangle (0,1.8);
  \draw [dashed, line width=2pt] (1.5,0) -- (0,0);
  \fill (0,0) circle (3pt);
  \draw [line width=2pt] (0,1.8) -- (0,0);
    \draw [dashed, dash pattern={on 10pt off 2pt}] (-0.5,0) --(2,0)  (0,-0.2) -- (0,2.2);
     \end{tikzpicture}
        \hspace{0.6cm}
    \begin{tikzpicture}
    \draw [fill=gray, opacity=0.2] (0,1.8) -- (0,0) -- (1.5,1.8) -- (0,1.8);
  \draw [opacity=0.1] (1.5,0) rectangle (0,1.8);
  \draw [dashed, line width=2pt] (1.5,0) -- (0,0);
  \fill (0,0) circle (3pt);
  \draw [line width=2pt] (0,1.8) -- (0,0);
    \draw [dashed, dash pattern={on 10pt off 2pt}] (-0.5,0) --(2,0)  (-0.5,1.8) --(2,1.8) ;
     \end{tikzpicture}
        \hspace{0.6cm}
          \begin{tikzpicture}
    \draw [fill=gray, opacity=0.2] (0,1.8) -- (0,0) -- (1.5,1.8) -- (0,1.8);
  \draw [opacity=0.1] (1.5,0) rectangle (0,1.8);
  \draw [dashed, line width=2pt] (1.5,0) -- (0,0);
  \fill (0,0) circle (3pt);
  \draw [line width=2pt] (0,1.8) -- (0,0);
    \draw [dashed, dash pattern={on 10pt off 2pt}]   (-0.5,0) --(2,0)  (1.5,-0.2) -- (1.5,2.2) ;
     \end{tikzpicture}
        \hspace{0.6cm}
          \begin{tikzpicture}
    \draw [fill=gray, opacity=0.2] (0,1.8) -- (0,0) -- (1.5,1.8) -- (0,1.8);
  \draw [opacity=0.1] (1.5,0) rectangle (0,1.8);
  \draw [dashed, line width=2pt] (1.5,0) -- (0,0);
  \fill (0,0) circle (3pt);
  \draw [line width=2pt] (0,1.8) -- (0,0);
    \draw [dashed, dash pattern={on 10pt off 2pt}]  (-0.5,1.8) --(2,1.8)  (1.5,-0.2) -- (1.5,2.2);
     \end{tikzpicture}

\caption{Corner regions meeting the surface boundary in two ways (up to colouring)}
\label{fig7}
\end{figure}

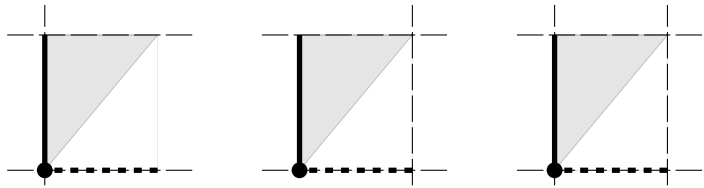
\begin{figure}
\centering
                   \begin{tikzpicture}
    \draw [fill=gray, opacity=0.2] (0,1.8) -- (0,0) -- (1.5,1.8) -- (0,1.8);
  \draw [opacity=0.1] (1.5,0) rectangle (0,1.8);
  \draw [dashed, line width=2pt] (1.5,0) -- (0,0);
  \fill (0,0) circle (3pt);
  \draw [line width=2pt] (0,1.8) -- (0,0);
    \draw [dashed, dash pattern={on 10pt off 2pt}]   (-0.5,0) --(2,0) (-0.5,1.8) --(2,1.8)  (0,-0.2) -- (0,2.2);
     \end{tikzpicture}
      \hspace{0.6cm}
                  \begin{tikzpicture}
    \draw [fill=gray, opacity=0.2] (0,1.8) -- (0,0) -- (1.5,1.8) -- (0,1.8);
  \draw [opacity=0.1] (1.5,0) rectangle (0,1.8);
  \draw [dashed, line width=2pt] (1.5,0) -- (0,0);
  \fill (0,0) circle (3pt);
  \draw [line width=2pt] (0,1.8) -- (0,0);
    \draw [dashed, dash pattern={on 10pt off 2pt}]   (-0.5,0) --(2,0) (-0.5,1.8) --(2,1.8)  (1.5,-0.2) -- (1.5,2.2);
     \end{tikzpicture}       
   \hspace{0.6cm}
                  \begin{tikzpicture}
    \draw [fill=gray, opacity=0.2] (0,1.8) -- (0,0) -- (1.5,1.8) -- (0,1.8);
  \draw [opacity=0.1] (1.5,0) rectangle (0,1.8);
  \draw [dashed, line width=2pt] (1.5,0) -- (0,0);
  \fill (0,0) circle (3pt);
  \draw [line width=2pt] (0,1.8) -- (0,0);
    \draw [dashed, dash pattern={on 10pt off 2pt}]  (0,-0.2) -- (0,2.2) (-0.5,0) --(2,0) (-0.5,1.8) --(2,1.8)  (1.5,-0.2) -- (1.5,2.2);
     \end{tikzpicture}   

\caption{Corners meeting the surface boundary in three (up to colouring) or four ways }
\label{fig8}
\end{figure}

Edge-biregular maps with boundary occur when one or more of the generators (and hence also the relators including them) collapse and hence are missing from the usual group presentation $H = \langle \; r_0, \; r_2, \;  \rho_0, \; \rho_2 \; | \; r_0^2, \; r_2^2, \;  \rho_0^2, \; \rho_2^2 \; (r_0r_2)^2, \;  (\rho_0\rho_2)^2 , \; (r_2 \rho_2)^{k/2}, \;  (r_0\rho_0)^{l/2} , \; \dots \rangle$. For some $x \in \{r, \rho\}$ if $x_0$ is missing then the corresponding coloured edge (and so all such edges) will in fact be a semi-edge to the boundary, while if $x_2$ is missing then all the edges with the corresponding colour will be along the surface boundary. If, for example the unshaded edges are on the surface boundary, then $\rho_2$ is missing and we would denote the edge-biregular map by $M = (H ; r_0, r_2, \rho_0, \bar{\rho_2})$.

Considering an edge-biregular map $M = (H ; r_0, r_2, \rho_0, \bar{\rho_2})$, like type (a) in Figure \ref{fig6}, we have, by regularity of the action of $H$ on corners of the map, all the unshaded edges must be on the boundary of the map. Let us assume, for the moment, that this is the only way in which corner regions meet the surface boundary.  When the bold edges are semi-edges, which means that $r_0 = r_2$, the group $H = \langle r_0, \rho_0 \rangle$ is dihedral, and the edge-biregular map has a single face with the supporting surface being a disc. This is illustrated in Figure \ref{fig9}. When $l=4$ the underlying graph is a ladder so the surface is homeomorphic to either an annulus or a M\"{o}bius strip. These maps can have any number of bold (semi)edges, and generalisations are shown in Figure \ref{fig9}, where the orientability of the second surface is dependent on the identification of edges (and the relators in the group presentation) where the dots are.

\begin{figure}
\centering
\begin{tikzpicture} [scale=0.75]
       \fill[gray, opacity=0.1] (0,0) circle (4);

  \draw [dash pattern={on 8pt off 2pt}] (4,0) arc (0:310:4);
    \draw [dash pattern={on 8pt off 2pt}] (4,0) arc (0:-10:4);
      \draw [dashed, line width=2pt] (4,0) arc (0:310:4);
    \draw [dashed, line width=2pt] (4,0) arc (0:-10:4);
  \foreach \x in {0,...,10}
  \draw [line width=2pt] (30*\x:3) -- (30*\x:4);
  \foreach \x in {0,...,10}
    \fill (30*\x:4) circle (3pt);
    \foreach \x in {1,...,3}
    \fill (-15-10*\x:3.5) circle (1pt);
     \end{tikzpicture}
    \hspace{2cm}
    \begin{tikzpicture} [scale=0.75]
           \fill[gray, opacity=0.1] (0,0) circle (4);
          \fill[white] (0,0) circle (2);
\begin{scope}
\clip (0,0)--(310:5)--(350:5)--(0,0);
   \fill[white]  (0,0) circle (4);
\end{scope}       
  \draw [dash pattern={on 8pt off 2pt}] (4,0) arc (0:310:4);
    \draw [dash pattern={on 8pt off 2pt}] (4,0) arc (0:-10:4); 
    \draw [dash pattern={on 8pt off 2pt}] (2,0) arc (0:310:2);
    \draw [dash pattern={on 8pt off 2pt}] (2,0) arc (0:-10:2); 
    \draw [dashed, line width=2pt] (4,0) arc (0:310:4);
    \draw [dashed, line width=2pt] (4,0) arc (0:-10:4); 
    \draw [dashed, line width=2pt] (2,0) arc (0:310:2);
    \draw [dashed, line width=2pt] (2,0) arc (0:-10:2);
  \foreach \x in {0,...,10}
  \draw  [line width=2pt] (30*\x:2) -- (30*\x:4);
  \foreach \x in {0,...,10}
    \fill (30*\x:4) circle (3pt);
      \foreach \x in {0,...,10}
    \fill (30*\x:2) circle (3pt);
        \foreach \x in {1,...,3}
    \fill (-10-10*\x:3) circle (1pt);
 \end{tikzpicture}
\caption{Edge-biregular maps with unshaded edges along the surface boundary}
\label{fig9}
\end{figure}
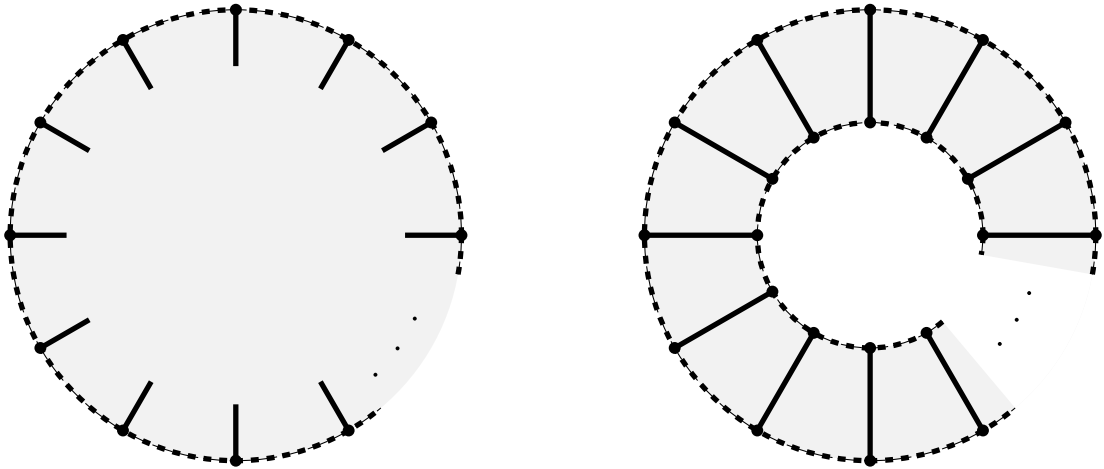

Notice that the vertices of these edge-biregular maps have odd valency. This is no surprise since having the vertices on the surface boundary destroys any hope of a non-trivial rotation around a vertex being an automorphism of the map.

Thinking about these edge-biregular maps in terms of their group presentations yields something even more interesting. Type (a) has the following presentation: $$H = \langle \; r_0, \; r_2, \;  \rho_0 \; \;  | \; \; r_0^2, \; r_2^2, \;  \rho_0^2, \;  (r_0r_2)^2, \;  (r_0\rho_0)^{l/2} , \; \dots \rangle$$ and we are left with $H$ being \emph{a group which is generated by three involutions, two of which commute}. This phrase, in italics, will be very familiar to those who study fully regular maps, as this is precisely how the automorphism group of a fully regular map is often described. We have our group $H$ which looks very similar to (if not exactly the same as!) the standard partial presentation for a regular map with $\rho_0$ in place of $r_1$. This indicates a very close relationship between the two descriptions of maps which is as follows. 

\begin{constr}
Starting with a fully regular map $(G ; r_0, r_2, r_1)$ of type $(k,l)$, see \cite{S} for details, choose an arbitrary vertex and cut out a small disc neighbourhood around that vertex (the disc must be small enough not to include any other vertices nor must it interfere unnecessarily with any edges). Place new vertices where edges meet this new surface boundary, and draw unshaded edges between the new vertices all along this new surface boundary. Repeating this process for all of the vertices of the original fully regular map will yield the well-defined edge-biregular map $(H ; r_0, r_2, \rho_0, \bar{\rho_2})$ of type $(3,2l)$ with $\rho_0 = r_1$. In the other direction, starting with an edge-biregular map with the unshaded edges along a surface boundary, the related regular map can be built by contracting each unshaded edge down into a single vertex. By letting all the unshaded edges (and hence flags, and indeed boundaries) disappear in the process, the reflections which used to act along the unshaded edges (conjugates of $\rho_0$) now simply act as the reflections across the resulting new corners (that is, conjugates of $r_1$) of the implicit fully regular map.
\end{constr}

An edge-biregular map with boundary of type (b) is the twin of a map of type (a), while edge-biregular maps with boundary types (c) and (d) also form twin pairs. Figure \ref{fig10} shows some edge-biregular maps of type (c), the oppositely coloured twins of type (d) edge-biregular maps.

\begin{figure}
\centering
   \begin{tikzpicture} [scale=0.45]
       \fill[gray, opacity=0.1] (0,0) circle (4);
          \fill[white] (0,0) circle (2);
\begin{scope}
\clip (0,0)--(310:5)--(350:5)--(0,0);
   \fill[white]  (0,0) circle (4);
\end{scope}          
          
  \draw [dash pattern={on 8pt off 2pt}] (4,0) arc (0:310:4);
    \draw [dash pattern={on 8pt off 2pt}] (4,0) arc (0:-10:4); 
    \draw [dash pattern={on 8pt off 2pt}] (2,0) arc (0:310:2);
    \draw [dash pattern={on 8pt off 2pt}] (2,0) arc (0:-10:2);
    \draw [dashed, line width=1.5pt] (3,0) arc (0:310:3);
    \draw [dashed, line width=1.5pt] (3,0) arc (0:-10:3);
  \foreach \x in {0,...,10}
  \draw [line width=1.5pt] (30*\x:2) -- (30*\x:4);
  \foreach \x in {0,...,10}
    \fill (30*\x:3) circle (6pt);
        \foreach \x in {1,...,3}
    \fill (-10-10*\x:3) circle (1pt);
 \end{tikzpicture}
 
\vspace{1cm}

    \begin{tikzpicture} [scale=0.45]
    \fill[gray, opacity=0.1] (0,0) circle (6);
  \draw [dash pattern={on 8pt off 2pt}] (0,0) circle (6);
   \foreach \x in {1,...,3}
    \draw [dash pattern={on 8pt off 2pt}] (120*\x:3) circle (1);
  \foreach \x in {1,...,3}
    \draw [dashed, line width=2pt] (0,0) -- (60+120*\x:5);
  \foreach \x in {1,...,3}
    \draw [line width=2pt] (0,0) -- (120*\x:2);
    \draw [line width=2pt] (60:5) -- (3,1);
        \draw [line width=2pt] (60:5) -- (120:3);
            \draw [line width=2pt] (180:5) -- (120:3);
                \draw [line width=2pt] (180:5) -- (240:3);
                    \draw [line width=2pt] (-60:5) -- (240:3);
                        \draw [line width=2pt] (-60:5) -- (3,-1);
                           \foreach \x in {1,...,3}
       \foreach \x in {1,...,3}
    \draw [line width=2pt] (60+120*\x:5) -- (60+120*\x:6);

\foreach \x in {1,...,3} 
    \fill[white] (120*\x:3) circle (1);
 \foreach \x in {1,...,3}
        \draw [dash pattern={on 8pt off 2pt}, fill=white] (120*\x:3) circle (1);
  \draw [dashed, line width=2pt] (0,0) circle (5);
        \foreach \x in {1,...,3}
    \fill (60+120*\x:5) circle (6pt);
    \fill (0,0) circle (6pt);
 \end{tikzpicture}
    \hspace{1.5cm}
 \begin{tikzpicture} [scale=0.45]    
 
\fill[gray, opacity=0.1] (-2,-2) rectangle(10,10);
 
  \foreach \x in {0,...,3}
    \draw [dashed, line width=2pt] (-2+4*\x,-2) -- (-2+4*\x,10);
      \foreach \y in {0,...,3}
    \draw [dashed, line width=2pt] (-2,-2+4*\y) -- (10,-2+4*\y);

\draw [->, line width=2pt] (-2,3.5)--(-2,4);
\draw [->, line width=2pt] (10,3.5)--(10,4);

\draw [->>, line width=2pt] (3.5,-2)--(4,-2);
\draw [->>, line width=2pt] (3.5,10)--(4,10);

    \draw [line width=2pt] (-2,-2) -- (10,10);
    \draw [line width=2pt] (2,-2) -- (10,6);
   \draw [line width=2pt] (6,-2) -- (10,2); 
      \draw [line width=2pt] (-2,2) -- (6,10);
         \draw [line width=2pt] (-2,6) -- (2,10);
         
             \draw [line width=2pt] (-2,2) -- (2,-2);
    \draw [line width=2pt] (2,10) -- (10,2);
   \draw [line width=2pt] (6,10) -- (10,6); 
      \draw [line width=2pt] (-2,6) -- (6,-2);
         \draw [line width=2pt] (-2,10) -- (10,-2);
    
        \foreach \x in {0,...,3}
    \foreach \y in {0,...,3}
    \fill (-2+4*\x,-2+4*\y) circle (6pt);
    \foreach \x in {0,...,2}
    \foreach \y in {0,...,2}
    \fill[white] (4*\x,4*\y) circle (1);
 \foreach \x in {0,...,2}
  \foreach \y in {0,...,2}
        \draw [dash pattern={on 8pt off 2pt}] (4*\x,4*\y) circle (1);

 \end{tikzpicture}
 
\caption{Edge-biregular maps of type (c) with semi-edges meeting the surface boundaries}
\label{fig10}
\end{figure}
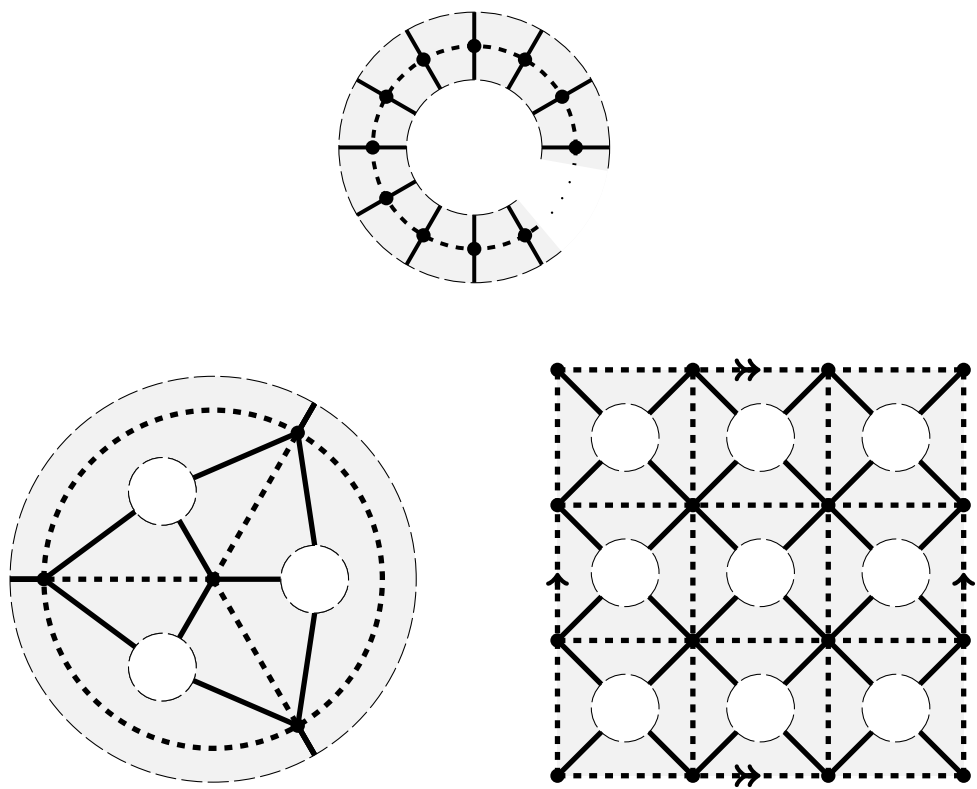

If one of the edges of a particular colour, let us say unshaded, meets the boundary, like (d) in Figure \ref{fig6}, then this edge, and hence all edges of this type are semi-edges to a boundary of the surface. This is an edge-biregular map $(H; r_0, r_2, \bar{\rho_0}, \rho_2, )$. Assuming that a corner region meets the boundary only this way, and the shaded edges are not semi-edges (otherwise we would end up with a semistar), then each face region of the map must have as its boundary: one shaded edge; two unshaded semi-edges; and a section of the surface boundary. This type (d) edge-biregular map thus has presentation: $$H = \langle \; r_0, \; r_2, \;  \rho_2 \; \;  | \; \; r_0^2, \; r_2^2, \; \rho_2^2, \; (r_0r_2)^2, \;  (r_2\rho_2)^{k/2} , \; \dots \rangle$$ and we are again left with $H$ being \emph{a group which is generated by three involutions, two of which commute}. This time we have $\rho_2$ instead of $r_1$, and this yields the following construction. 

\begin{constr}
Starting from a regular map, $(G;r_0,r_2,r_1)$, drawn with all its edges shaded, we can obtain the well-defined implied edge-biregular map $(H; r_0, r_2, \bar{\rho_0}, \rho_2, )$ by cutting a disc out from the interior of each face and drawing an unshaded semi-edge from this new surface boundary to each of the surrounding vertices, thereby letting $r_1=\rho_2$. Alternatively, given an edge-biregular map with unshaded semi-edges to the surface boundary, we can obtain a regular map by deleting all the unshaded semi-edges and contracting each of the boundaries in the surface to ensure the interior of each resulting face is homeomorphic to an open disc. This valid since the boundary component which a semi-edge meets must be the same as the boundary component which the semi-edge from the adjacent corner within the same face meets.
\end{constr}

If a corner meets the boundary in more than one of the ways listed (as in the diagrams in Figures \ref{fig7} and \ref{fig8}) then options are severely restricted: there are at most two involutory generators and so the group $H$ is dihedral or cyclic. The interested reader may wish to verify that these include the single faced maps on a disc $H = \langle r_0, \rho_0 \rangle$, the edge-biregular maps on an orientable band $H =\langle r_2, \rho_0 \rangle$, the 4-cornered map on a disc $H = \langle r_0, r_2 \rangle \cong V_4$ and the semistar on a disc $H = \langle r_2, \rho_2 \rangle$ as well as their twins, and the more trivial maps on a disc which have only one or two corners.

\subsection{Edge-biregular maps with non-distinct generators}

There is another natural, if somewhat trivial way of building an edge-biregular map from any given fully regular map, this time without introducing any boundaries, and that is as follows. 

\begin{constr}\label{constr3}
Given a fully regular map $(G ; r_0, r_2, r_1)$ of type $(k,l)$ drawn with all its edges shaded. At every vertex draw $k$ unshaded semi-edges, so that there is one semi-edge in every corner of the original map. This is thus the edge-biregular map $(H ; r_0, r_2, \rho, \rho) $ where $\rho=\rho_0=\rho_2=r_1$.
\end{constr}

Figure \ref{fig11} shows a cube (a fully regular map) drawn with all shaded edges, along with the edge-biregular maps as described in the three constructions above. If you draw the embedded Cayley graphs for each of these maps, the close relationships between this collection of four maps becomes very clear. We could of course include the corresponding twin maps too, as well as the dual maps, and hence each fully regular map has many edge-biregular children.

\begin{figure}
\centering
\begin{tikzpicture}
[scale=0.42]
\fill[gray, opacity=0.1] (0,0) circle (9);
\draw (0,0) circle (9);
\draw [line width=2pt] (-5,-5) -- (5,-5)  (-5,5) -- (5,5) ;
 \draw [line width=2pt] (-5,-5) -- (-5,5)  (5,-5) -- (5,5) ;
\draw [line width=2pt] (-2,-2) -- (2,-2)  (-2,2) -- (2,2) ;
 \draw [line width=2pt] (-2,-2) -- (-2,2)  (2,-2) -- (2,2) ; 
 
 
\draw [line width=2pt] (-2,-2)--(-5,-5) (2,2)--(5,5) (-2,2)--(-5,5)  (2,-2)--(5,-5);
  \fill (2,2) circle (6pt); 
  \fill (-2,2) circle (6pt); 
    \fill (2,-2) circle (6pt); 
      \fill (-2,-2) circle (6pt); 
   \fill (5,5) circle (6pt); 
      \fill (-5,5) circle (6pt); 
         \fill (5,-5) circle (6pt); 
            \fill (-5,-5) circle (6pt); 
 
 \end{tikzpicture}
 \hspace{0.4cm} 
\begin{tikzpicture}
[scale=0.42]
\fill[gray, opacity=0.1] (0,0) circle (9);
\draw (0,0) circle (9);
\draw [line width=2pt] (-5,-5) -- (5,-5)  (-5,5) -- (5,5) ;
 \draw [line width=2pt] (-5,-5) -- (-5,5)  (5,-5) -- (5,5) ;
\draw [line width=2pt] (-2,-2) -- (2,-2)  (-2,2) -- (2,2) ;
 \draw [line width=2pt] (-2,-2) -- (-2,2)  (2,-2) -- (2,2) ; 
 
 
\draw [line width=2pt] (-2,-2)--(-5,-5) (2,2)--(5,5) (-2,2)--(-5,5)  (2,-2)--(5,-5);
  \fill (2,2) circle (6pt); 
  \fill (-2,2) circle (6pt); 
    \fill (2,-2) circle (6pt); 
      \fill (-2,-2) circle (6pt); 
   \fill (5,5) circle (6pt); 
      \fill (-5,5) circle (6pt); 
         \fill (5,-5) circle (6pt); 
            \fill (-5,-5) circle (6pt); 
 
\draw [line width=2pt, dashed] (2,2)--(1,1) (1,3)--(3,1);  
\draw [line width=2pt, dashed] (-2,2)--(-1,1) (-3,1)--(-1,3); 
\draw [line width=2pt, dashed] (2,-2)--(1,-1) (1,-3)--(3,-1) ; 
\draw [line width=2pt, dashed] (-2,-2)--(-1,-1) (-1,-3)--(-3,-1) ; 
\draw [line width=2pt, dashed] (5,5)--(6,6) (4,3)--(5,5) (5,5)--(3,4) ;  
\draw [line width=2pt, dashed] (-5,5)--(-6,6) (-5,5)--(-4,3) (-5,5)--(-3,4) ; 
\draw [line width=2pt, dashed] (5,-5)--(6,-6) (5,-5)--(4,-3) (5,-5)--(3,-4) ; 
\draw [line width=2pt, dashed] (-5,-5)--(-6,-6) (-5,-5)--(-4,-3) (-5,-5)--(-3,-4) ;

 \end{tikzpicture}
\vspace{0.4cm} 
 
 \begin{tikzpicture}
[scale=0.42]

\fill[gray, opacity=0.1] (0,0) circle (9);
\draw (0,0) circle (9);
\draw [line width=2pt] (-5,-5) -- (5,-5)  (-5,5) -- (5,5) ;
 \draw [line width=2pt] (-5,-5) -- (-5,5)  (5,-5) -- (5,5) ;
\draw [line width=2pt] (-2,-2) -- (2,-2)  (-2,2) -- (2,2) ;
 \draw [line width=2pt] (-2,-2) -- (-2,2)  (2,-2) -- (2,2) ; 
 
 
\draw [line width=2pt] (-2,-2)--(-5,-5) (2,2)--(5,5) (-2,2)--(-5,5)  (2,-2)--(5,-5);

\draw [line width=1pt, dash pattern={on 8pt off 2pt}, fill=white] (2,2) circle (1);  
\draw [line width=1pt, dash pattern={on 8pt off 2pt}, fill=white] (-2,2) circle (1); 
\draw [line width=1pt, dash pattern={on 8pt off 2pt}, fill=white] (2,-2) circle (1); 
\draw [line width=1pt, dash pattern={on 8pt off 2pt}, fill=white] (-2,-2) circle (1); 
\draw [line width=1pt, dash pattern={on 8pt off 2pt}, fill=white] (5,5) circle (1);  
\draw [line width=1pt, dash pattern={on 8pt off 2pt}, fill=white] (-5,5) circle (1); 
\draw [line width=1pt, dash pattern={on 8pt off 2pt}, fill=white] (5,-5) circle (1); 
\draw [line width=1pt, dash pattern={on 8pt off 2pt}, fill=white] (-5,-5) circle (1);

  \fill (2,1) circle (6pt); 
    \fill (2,-1) circle (6pt);
  \fill (-2,1) circle (6pt); 
    \fill (-2,-1) circle (6pt);
  \fill (1,2) circle (6pt); 
    \fill (1,-2) circle (6pt);
  \fill (-1,2) circle (6pt); 
    \fill (-1,-2) circle (6pt); 
    
      \fill (4,5) circle (6pt); 
    \fill (4,-5) circle (6pt);
  \fill (-4,5) circle (6pt); 
    \fill (-4,-5) circle (6pt);
  \fill (5,4) circle (6pt); 
    \fill (5,-4) circle (6pt);
  \fill (-5,4) circle (6pt); 
    \fill (-5,-4) circle (6pt); 
 
  \fill (45:3.8) circle (6pt); 
    \fill (45:6.1) circle (6pt);
  \fill (-45:3.8) circle (6pt); 
    \fill (-45:6.1) circle (6pt);
      \fill (90+45:3.8) circle (6pt); 
    \fill (90+45:6.1) circle (6pt);
      \fill (180+45:3.8) circle (6pt); 
    \fill (180+45:6.1) circle (6pt);    
  
\draw [line width=2pt, dashed] (2,2) circle (1);  
\draw [line width=2pt, dashed] (-2,2) circle (1); 
\draw [line width=2pt, dashed] (2,-2) circle (1); 
\draw [line width=2pt, dashed] (-2,-2) circle (1); 
\draw [line width=2pt, dashed] (5,5) circle (1);  
\draw [line width=2pt, dashed] (-5,5) circle (1); 
\draw [line width=2pt, dashed] (5,-5) circle (1); 
\draw [line width=2pt, dashed] (-5,-5) circle (1); 

 \end{tikzpicture}
  \hspace{0.4cm} 
 \begin{tikzpicture}
[scale=0.42]

\draw (0,0) circle (9);
\draw [line width=2pt] (-5,-5) -- (5,-5)  (-5,5) -- (5,5) ;
 \draw [line width=2pt] (-5,-5) -- (-5,5)  (5,-5) -- (5,5) ;
\draw [line width=2pt] (-2,-2) -- (2,-2)  (-2,2) -- (2,2) ;
 \draw [line width=2pt] (-2,-2) -- (-2,2)  (2,-2) -- (2,2) ; 
 
 
\draw [line width=2pt] (-2,-2)--(-5,-5) (2,2)--(5,5) (-2,2)--(-5,5)  (2,-2)--(5,-5);
  \fill (2,2) circle (6pt); 
  \fill (-2,2) circle (6pt); 
    \fill (2,-2) circle (6pt); 
      \fill (-2,-2) circle (6pt); 
   \fill (5,5) circle (6pt); 
      \fill (-5,5) circle (6pt); 
         \fill (5,-5) circle (6pt); 
            \fill (-5,-5) circle (6pt); 
 
\draw [line width=1pt, dash pattern={on 8pt off 2pt}] (0,0) circle (8.5) ;  
\fill[gray, opacity=0.1] (0,0) circle (8.5);
\draw [line width=1pt, dash pattern={on 8pt off 2pt}, fill=white] (0,0) circle (1.41); 
\draw [line width=1pt, dash pattern={on 8pt off 2pt}, fill=white] (1,3)--(3,4)--(-3,4)--(-1,3)--(1,3); 
\draw [line width=1pt, dash pattern={on 8pt off 2pt}, fill=white] (1,-3)--(3,-4)--(-3,-4)--(-1,-3)--(1,-3) ; 
\draw [line width=1pt, dash pattern={on 8pt off 2pt}, fill=white] (-3,1)--(-4,3)--(-4,-3)--(-3,-1)--(-3,1);  
\draw [line width=1pt, dash pattern={on 8pt off 2pt}, fill=white] (3,1)--(4,3)--(4,-3)--(3,-1)--(3,1); 

\draw [line width=2pt, dashed] (2,2)--(1,1) (1,3)--(3,1);  
\draw [line width=2pt, dashed] (-2,2)--(-1,1) (-3,1)--(-1,3); 
\draw [line width=2pt, dashed] (2,-2)--(1,-1) (1,-3)--(3,-1) ; 
\draw [line width=2pt, dashed] (-2,-2)--(-1,-1) (-1,-3)--(-3,-1) ; 
\draw [line width=2pt, dashed] (5,5)--(6,6) (4,3)--(5,5) (5,5)--(3,4) ;  
\draw [line width=2pt, dashed] (-5,5)--(-6,6) (-5,5)--(-4,3) (-5,5)--(-3,4) ; 
\draw [line width=2pt, dashed] (5,-5)--(6,-6) (5,-5)--(4,-3) (5,-5)--(3,-4) ; 
\draw [line width=2pt, dashed] (-5,-5)--(-6,-6) (-5,-5)--(-4,-3) (-5,-5)--(-3,-4) ; 

 \end{tikzpicture}
 
\caption{A fully regular map and some of its associated edge-biregular maps}
\label{fig11}
\end{figure}
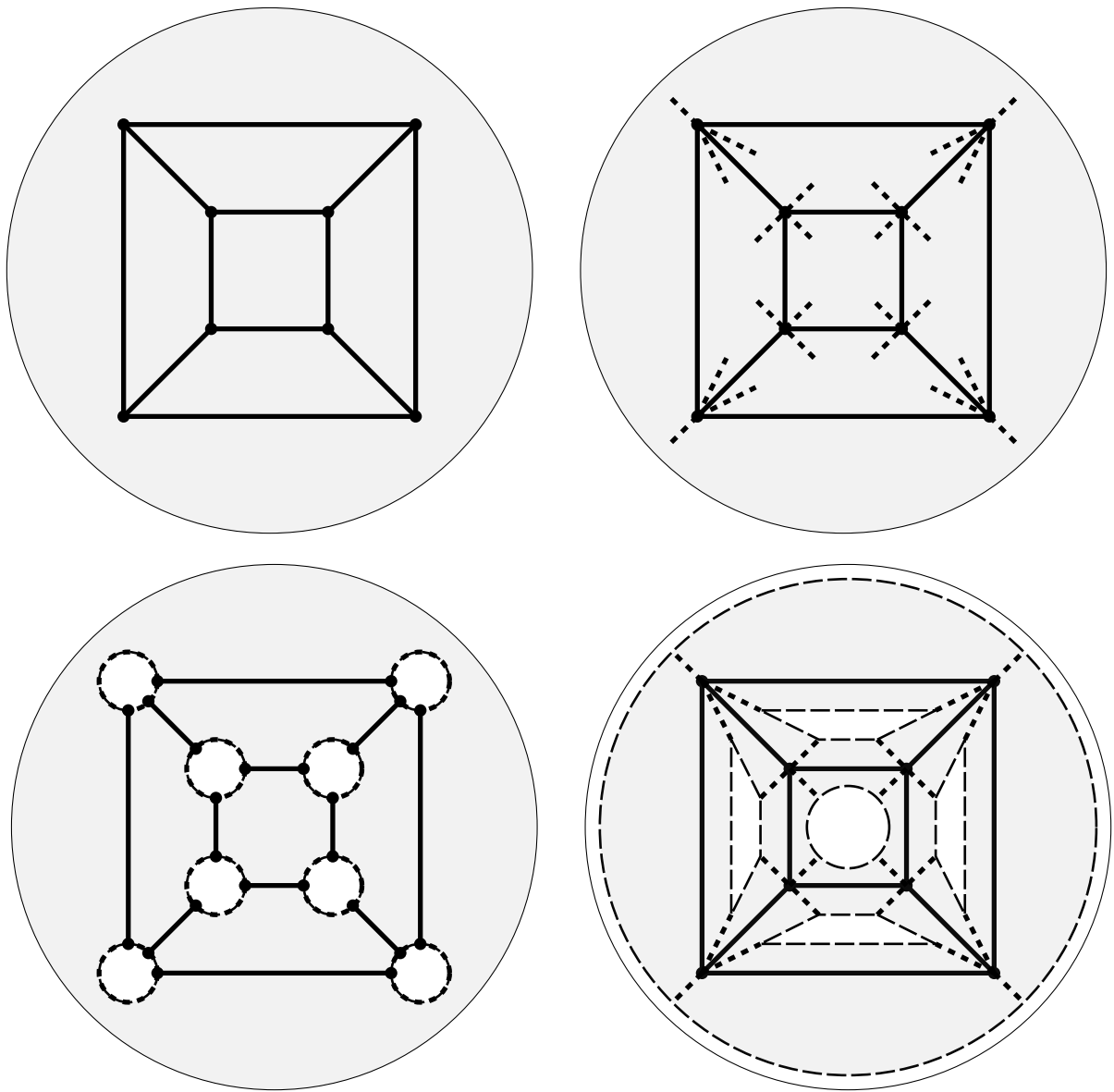

\smallskip

Having addressed maps with non-empty boundary components in the previous section, henceforth we will assume that the supporting surfaces for our edge-biregular maps are closed, that is without boundary.

\smallskip

There are other degeneracies arising from non-distinct generators, and we will now address these in turn. Up to duality and twinness we may assume that one of the non-distinct generators is $r_0$, and so as to avoid bold semi-edges, we assume the other is in $\{ \rho_0, \rho_2 \}$. 

In the case when $r_0=\rho_0$ we have $H = \langle r_0, r_2, \rho_2 \rangle $, the group for an edge-biregular map which has digonal faces. Maps of type $(k,2)$ are known to be fully regular, and are supported by either the sphere or the projective plane.

\begin{constr}\label{constr4}
The edge-biregular map $(H;\rho,r_2,\rho,\rho_2)$ where $\rho=r_0=\rho_0$ has an implied regular map $(G;r_0,r_2,r_1)$ where $r_1=\rho_2$ which is an embedding of the bold edges of the edge-biregular map, with the dashed edges deleted. This implied fully regular map also has digonal faces. In the other direction, a fully regular map with digonal faces can be made into an edge-biregular map by the addition of unshaded edges, each cutting each original digonal face into two alternate-edge-coloured digons.
\end{constr}

\smallskip

\begin{rem}
It may be tempting to think that any edge-biregular map $(H; \dots )$ with non-distinct generators, implies a natural regular map $(G; r_0, r_2, r_1)$ with $H \cong G$ and supported by the same surface. However, this is not the case as will now become clear.
\end{rem}

\smallskip

In the case where $r_0=\rho_2=\rho$ we have the edge-biregular map $(H;\rho,r_2,\rho_0,\rho)$ while the group $H = \langle r_0, r_2, \rho_0 \rangle$. Geometrically $r_0=\rho_2$ implies that (all) the edges in the bold orbit are loops. Also $r_0r_2=\rho_2 r_2$ has order dividing two and hence the vertices all have degree (2 or) $4$. Similarly $\rho_0\rho_2 = \rho_0 r_0$ has order dividing two, and so (assuming no further degeneracies occur) we have an edge-biregular map of type $(4,4)$. Now, the tessellation of type $(4,4)$ is Euclidean, indicating that the maps $(H;\rho,r_2,\rho_0,\rho)$ supported by a closed surface must be, depending on orientability, on either the Klein bottle or the torus. \textit{Note: We present a classification of proper edge-biregular maps on the torus and the Klein bottle in sections \ref{classtorus} and \ref{classKlein}.}

Treating $r_0$ and $r_2$ in the usual way, we consider the implications of $\rho_0$ being thought of as $r_1$, the reflection in a corner of a regular map. Informally this is like reducing (the marked one, and hence all) the dashed edges to negligable length, thereby identifying the endpoints of each dashed edge and stitching the remaining shaded flags together in the correspondingly natural way. The resulting fully regular map would therefore have faces which are digons and the underlying graph would consist of just one vertex. Thus the underlying graph has become a bouquet of loops, \textbf{but} this process has also drastically changed the underlying surface, as we might have expected if we remembered that the Klein bottle has no regular map. In the case of the Klein bottle, the resulting surface is the projective plane. However, in the case of a toroidal map, the object created by this process (which is equivalent to contracting a non-contractible cycle) is in fact a pseudo-surface, a sphere with one pair of antipodal points identified, in which case the resulting map is an example of a regular pinched map, see \cite{ABS} for further details. 

\bigskip

\section{Edge-biregular maps on closed surfaces where $\chi \ge 0$}

\subsection{Euler's formula}

Consider the edge-biregular map $(H; r_0, r_2, \rho_0, \rho_2)$ which has type $(k,l)$, and no semi-edges.

The stabiliser for the distinguished face is denoted $D_l : = \langle r_0, \rho_0 \rangle$ and is isomorphic to the dihedral group with $l$ elements, while the stabiliser for the distinguished vertex is called $D_k : = \langle r_2, \rho_2 \rangle$ and is isomorphic to the dihedral group with $k$ elements. The map thus has $\frac{|H|}{l}$ faces and $\frac{|H|}{k}$ vertices.

The stabiliser group of an edge is isomorphic to $V_4$ and, for the distinguished shaded edge is $\langle r_0,r_2 \rangle$, while for the distinguished unshaded edge the stabiliser is $\langle \rho_0, \rho_2 \rangle$. Hence the map has $\frac{2|H|}{4}$ edges.

Supposing that the map $(H; r_0, r_2, \rho_0, \rho_2)$ lies on a surface of Euler characteristic $\chi$, we apply the well-known Euler-Poincar\'{e} formula, which is useful when we come to classifying these maps on particular surfaces:
\begin{equation}
\chi = |H| (\frac{1}{k} - \frac{1}{2}	+\frac{1}{l})
\end{equation}

When applying Euler's formula to a map with semi-edges one should realise that the existence of semi-edges contribute nothing to the value of $\chi$.

\bigskip

\subsection{Edge-biregular maps of non-negative Euler characteristic}

\bigskip
In this section, we classify all edge-biregular maps on surfaces for which $\chi \in \{ 0,1,2 \}$. We note that Duarte's thesis \cite{D} includes a classification of 2-restrictedly-regular edge-bipartite hypermaps, that includes all {\textit{proper}} edge-biregular maps, for the sphere, the projective plane and the torus. Here we allow for the possibility of semi-edges, and we present a different approach for Euclidean edge-biregular maps. While Duarte's work allows for many different group presentations which describe the same Euclidean map, our approach standardises the presentation for such an edge-biregular map, and extends to include a classification for such maps on the Klein bottle.

\begin{thm}
If the proper edge-biregular map $M = (H; r_0,r_2,\rho_0,\rho_2)$ has non distinct generators, then it is supported by a surface which has non-negative Euler characteristic.
\end{thm}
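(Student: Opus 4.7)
The plan is to combine the presentation of $H$ with the Euler formula established in section 4.1. Since $M$ is proper, no orbit of edges consists of semi-edges, so $r_0\ne r_2$ and $\rho_0\ne\rho_2$. Hence any coincidence among the four generators must pair one $r_i$ with one $\rho_j$, giving only four possibilities: $r_0=\rho_0$, $r_2=\rho_2$, $r_0=\rho_2$, and $r_2=\rho_0$. Under duality (swapping the subscripts $0$ and $2$) together with twinning (swapping $r_i$ with $\rho_i$), these reduce to two essentially distinct cases to analyse: (I) $r_0=\rho_0$, and (II) $r_0=\rho_2$.

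In Case (I), the element $r_0\rho_0$ is the identity, so its order $l/2$ equals $1$ and hence $l=2$. Euler's formula then gives $\chi=|H|(1/k-1/2+1/2)=|H|/k>0$, so the map is supported by a sphere or a projective plane.

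In Case (II), substituting $\rho_2=r_0$ into the standard relators, $r_2\rho_2=r_2r_0=(r_0r_2)^{-1}$ has order exactly $2$ by the relator $(r_0r_2)^2=1$ (properness rules out order $1$, since $r_0=r_2$ would give a semi-edge). Symmetrically, $r_0\rho_0=\rho_2\rho_0=(\rho_0\rho_2)^{-1}$ has order exactly $2$. Thus $k/2=l/2=2$, so $(k,l)=(4,4)$, the Euclidean type, and Euler's formula yields $\chi=|H|(1/4-1/2+1/4)=0$.

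Either way $\chi\ge 0$, proving the statement. The only delicate point is verifying that duality and twinning genuinely reduce the four pairings to the two representative cases; once that is checked, the conclusion follows immediately from the standard relators and a direct substitution into Euler's formula.
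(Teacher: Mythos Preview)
Your proof is correct and follows essentially the same approach as the paper: both reduce (via duality/twinness) to the two cases $r_0=\rho_0$ and $r_0=\rho_2$, deduce $l=2$ in the first case and $(k,l)=(4,4)$ in the second, and conclude $\chi\ge 0$. Your treatment of Case~(I) is slightly more economical, since you invoke Euler's formula directly to obtain $\chi=|H|/k>0$, whereas the paper goes on to analyse the group structure and pin down the surface as a sphere or projective plane---extra information not needed for the theorem as stated.
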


\begin{proof}
Suppose that the generators $r_0,r_2,\rho_0,\rho_2$ are not distinct. The map is assumed to be proper, so there are no semi-edges which forces $r_0 \neq r_2 $ and $\rho_0 \neq \rho_2$. Up to duality, we may assume that one of the redundant generators is labelled $r_0$. This leaves two options, $r_0=\rho_0$, or $r_0=\rho_2$, which we will address in turn.

If $r_0=\rho_0$ then the element $r_0 \rho_0$ is the identity which means $l=2$ and the faces of the map are digons. Also note that $H = \langle r_0,r_2,\rho_0,\rho_2 \rangle = \langle r_2,\rho_0,\rho_2 \rangle$ and we have $[\rho_0,r_2]$ as well as $[\rho_0,\rho_2]$ so $\rho_0$ is central. Hence either $H = D_k$ or $H = D_k \times \langle \rho_0 \rangle$. Applying the Euler-Poincar\'{e} formula we see that these two cases correspond respectively to a single-vertex degree-$k$ map on the projective plane (remembering $4|k$), and a degree-$k$ dipole embedded in the sphere (for any even $k$).

If $r_0=\rho_2$, then the bold edges must be loops. Also we have $[\rho_0,\rho_2]=[\rho_0,r_0]$ so $l=4$ and $[r_0,r_2]=[\rho_2,r_2]$ so $k=4$ and hence the map type is $(4,4)$. This implies we are on a surface with Euler characteristic $\chi = 0$. If the surface is orientable then the map is on the torus, otherwise the map is supported by the Klein bottle.
\end{proof}

We can go further:
In the case where $r_0=\rho_2$, notice that $r_0$ is central, $H = \langle r_0,r_2,\rho_0 \rangle$, and also that $\langle r_2,\rho_0 \rangle$ is a dihedral group, with order, say, $2m$. Now, if $r_0 \in \langle r_2,\rho_0 \rangle$ then $H = \langle r_2, \rho_0 \rangle \cong D_{2m}$ where $m$ is even. Also, being central, $r_0=(r_2 \rho_0)^{m/2}$ which yields a relator of odd length, forcing the supporting surface to be non-orientable, the Klein bottle. If, on the other hand, $r_0 \notin \langle r_2, \rho_0 \rangle$ then $H = \langle r_2, \rho_0 \rangle \times \langle r_0 \rangle \cong D_{2m} \times C_2$ and, as the direct product of a dihedral group (with the presentation generated by two involutions) and a copy of $C_2$, there can be no relators of odd length. These maps are therefore supported by an orientable surface of Euler characteristic $0$, namely the torus. 

\smallskip

The workings above have shown that when $H$ has non-distinct generators $r_0,r_2,\rho_0,\rho_2$ then the closed supporting surface for the edge-biregular map has non-negative Euler characteristic $\chi \in \{ 0,1,2 \}$. This yields the following corollary on which we will rely later.

\begin{cor}
A proper edge-biregular map $M= (H; r_0,r_2,\rho_0,\rho_2)$, embedded on a surface where $\chi < 0$, has distinct generators $r_0,r_2,\rho_0,\rho_2$.
\end{cor}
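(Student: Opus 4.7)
The corollary is essentially the contrapositive of the preceding theorem, so the plan is very short. The theorem establishes that for any proper edge-biregular map, non-distinct generators among $r_0,r_2,\rho_0,\rho_2$ force the supporting closed surface to have Euler characteristic $\chi \in \{0,1,2\}$ (with the case $r_0=\rho_0$ yielding either the sphere or the projective plane, and the case $r_0=\rho_2$ yielding either the torus or the Klein bottle). To deduce the corollary, I would simply take the contrapositive: if the supporting surface satisfies $\chi < 0$, then the generators cannot fail to be distinct, since any such failure would place the map on a surface of non-negative Euler characteristic.

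More explicitly, the plan is as follows. Let $M = (H; r_0, r_2, \rho_0, \rho_2)$ be a proper edge-biregular map on a closed surface with $\chi < 0$. Suppose for contradiction that the four generators are not distinct. Since $M$ is proper, there are no semi-edges, which rules out the coincidences $r_0 = r_2$ and $\rho_0 = \rho_2$. Up to duality and twinness, we may assume the collapse is either $r_0 = \rho_0$ or $r_0 = \rho_2$. In either case, the preceding theorem shows $\chi \ge 0$, contradicting our hypothesis. Therefore the generators are distinct.

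The main (and only) obstacle is ensuring that the reduction to the two cases $r_0=\rho_0$ and $r_0=\rho_2$ used in the theorem genuinely covers every way in which the four generators could fail to be distinct for a proper map; but this is already handled inside the theorem, so nothing new is needed here. The corollary is stated explicitly because it is the form in which the result will be invoked in the later classification on surfaces of negative Euler characteristic, where one wants to assume at the outset that $r_0, r_2, \rho_0, \rho_2$ are four distinct involutions.
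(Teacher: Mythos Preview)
Your proposal is correct and matches the paper's approach exactly: the paper derives the corollary simply as the contrapositive of the preceding theorem, noting that non-distinct generators force $\chi \in \{0,1,2\}$. Nothing further is needed.
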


\subsubsection{Edge-biregular maps on the torus}\label{classtorus}

Each regular map on the torus will have associated edge-biregular maps each created by inserting a semi-edge into each corner. Thus there will be edge-biregular maps of types $(12,6)$, $(6,12)$, $(8,8)$ derived by construction \ref{constr3} respectively from the well-known toroidal regular maps of types $(6,3)$, $(3,6)$ and $(4,4)$.

Turning our attention to proper edge-biregular maps supported by the torus, the requirement for even valency and face length makes it clear that they must all have type $(4,4)$. The infinite group $$ \Delta  \; =  \; \langle   \; R_0, R_2, P_0, P_2  \; | \; R_0^2, R_2^2, P_0^2, P_2^2, (R_0 R_2)^2, (P_0 P_2)^2, (R_0 P_0)^2, (R_2 P_2)^2  \;  \rangle$$ 
\noindent
describes the colour-preserving automorphism group of the alternate-edge-coloured infinite square grid, a tessellation of type $(4,4)$ on the Euclidean plane.

Two linearly independent translations of the plane determine a fundamental region for a torus. By marking an arbitrary point on the square grid, these two linearly independent translations will also generate a lattice of points on the plane. When the subgroup generated by the two translations is a normal subgroup $N_T$ of finite index in $\Delta$, the quotient of $\Delta$ by $N_T$ will then give $H$, the group of automorphisms for the edge-biregular map embedded on the torus.

A toroidal proper edge-biregular map, necessarily of type $(4,4)$, will thus have presentation:
$$H  \; = \; \langle  \; r_0, r_2, \rho_0, \rho_2  \; | \; r_0^2, r_2^2, \rho_0^2, \rho_2^2, (r_0 r_2)^2, (\rho_0 \rho_2)^2, (r_0 \rho_0)^2, (r_2 \rho_2)^2, \dots  \; \rangle $$
\noindent
where the dots indicate extra relators. The extra relators arise from $N_T$, the kernel of the epimorphism $\phi : \Delta \to H$ where $\phi : R_i \to r_i$ and $\phi : P_i \to \rho_i$ for each $i \in \{ 0, 2 \}$.

Since a map on the torus must be finite, each element of $H$ must have finite order. Specifically $r_0\rho_2$ must have finite order in $H$, let us say $a$. This means that $(R_0 P_2)^a$ is a translation in $N_T$. Since $a$ is, by definition, as small as possible, we may assume $(R_0 P_2)^a$ is one of two translations which generate the two-dimensional lattice of points, or equivalently $N_T$. Hence we may also assume that the other translation has the form $(R_0P_2)^b(R_2P_0)^c$ where $0 \leq b < a$ and $c>0$ is minimised. 

The group $N_T$ is normal in the group $\Delta$ if and only if all conjugates of each of the generators of $N_T$ are themselves members of the subgroup $N_T$. It is clear that this is the case for the generator $(R_0 P_2)^a$. In the case when $b=0$, all conjugates of $(R_0P_2)^b(R_2P_0)^c=(R_2P_0)^c$ are also within $N_T$, the lattice is rectangular, and $N_T = \langle (R_0 P_2)^a, (R_2P_0)^c \rangle$. 
However, when $b \neq 0$ then $N_T$ is normal if and only if we have both $((R_0P_2)^{ b}(R_2P_0)^c)^{R_0}= (R_0P_2)^{-b}(R_2P_0)^c \in N_T$ or equivalently $(R_0P_2)^{2b} \in N_T$, and also $(R_2P_0)^{2c} \in N_T$. This implies $a | 2b$ but $0 < b < a$ so we must have $2b=a$, meaning that the lattice, which is now also generated by $\langle (R_0P_2)^{ b}(R_2P_0)^c, (R_0P_2)^{ b}(R_2P_0)^{-c} \rangle$, is rhombic. 

This working can be summarised as follows, with examples of the corresponding maps shown in the diagrams in Figure \ref{fig12}:

\begin{prop}
Every proper toroidal edge-biregular map, up to duality and twinness, is determined by the group $H_{\dots}$ with one of the two following presentations:
$$H_{Rect} = \langle r_0, r_2, \rho_0, \rho_2  \; |  \; r_0^2, r_2^2, \rho_0^2, \rho_2^2, (r_0 r_2)^2, (\rho_0 \rho_2)^2, (r_0 \rho_0)^2, (r_2 \rho_2)^2, (r_0\rho_2)^a, (r_2 \rho_0)^c  \rangle $$
\noindent
or
$$H_{Rhomb} = \langle r_0, r_2, \rho_0, \rho_2  \; |  \; r_0^2, r_2^2, \rho_0^2, \rho_2^2, (r_0 r_2)^2, (\rho_0 \rho_2)^2, (r_0 \rho_0)^2, (r_2 \rho_2)^2, (r_0\rho_2)^{2b}, (r_0\rho_2)^b(r_2 \rho_0)^c  \rangle$$
\noindent
where $a$, $b$ and $c$ are positive integers.
\end{prop}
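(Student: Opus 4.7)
The plan is to exploit the Euclidean nature of the $(4,4)$-tessellation. First I note that, by the preceding classification of the closed torus case, every proper toroidal edge-biregular map has type $(4,4)$ and hence arises as $H = \Delta / N_T$, where $N_T$ is a finite-index torsion-free normal subgroup of $\Delta$ consisting of translations. A short verification using the four commutation relations defining $\Delta$ shows that $T := \langle R_0P_2,\; R_2P_0 \rangle$ is abelian and free of rank two: the two elements commute because each of the pairs $(R_0,R_2)$, $(P_0,P_2)$, $(R_0,P_0)$, $(R_2,P_2)$ commutes, and $R_0P_2$, $R_2P_0$ are the two perpendicular primitive translations of the universal square grid, so any such $N_T$ lies in $T$.

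Next I put $N_T$ in a standard echelon form. Since $[\Delta : N_T]$ is finite, the intersection $N_T \cap \langle R_0P_2 \rangle$ is nontrivial, so let $a$ be minimal with $(R_0P_2)^a \in N_T$. Up to duality (swapping $R_0 \leftrightarrow R_2$ and $P_0 \leftrightarrow P_2$) and twinness (swapping $R_i \leftrightarrow P_i$), I may single out the $R_0P_2$-axis and take $(R_0P_2)^a$ as one generator of the rank-two lattice $N_T$; the other generator can then be written as $(R_0P_2)^b(R_2P_0)^c$ with $c > 0$ minimal and $0 \le b < a$.

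The main step is deciding which triples $(a,b,c)$ actually produce a normal subgroup. Using the defining relations, a direct computation such as $R_0(R_0P_2)R_0 = P_2R_0 = (R_0P_2)^{-1}$ and $R_0(R_2P_0)R_0 = R_2P_0$ (and the three analogues) shows that conjugation by each generator of $\Delta$ acts linearly on $T \cong \mathbb{Z}^2$: both $R_0$ and $P_2$ invert $R_0P_2$ while fixing $R_2P_0$, and both $R_2$ and $P_0$ invert $R_2P_0$ while fixing $R_0P_2$. Consequently $N_T \trianglelefteq \Delta$ iff $N_T$ is stable under both coordinate negations on $T$. The generator $(R_0P_2)^a$ is trivially stable, while conjugation by $R_0$ sends the second generator to $(R_0P_2)^{-b}(R_2P_0)^c$, which lies in $N_T$ iff $(R_0P_2)^{2b} \in \langle (R_0P_2)^a \rangle$, i.e.\ $a \mid 2b$. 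The condition arising from $R_2$ (or $P_0$) is symmetric and equivalent.

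The case split is then forced. If $b = 0$, then $N_T = \langle (R_0P_2)^a,\; (R_2P_0)^c\rangle$ is automatically normal for arbitrary positive $a, c$ (rectangular lattice), and adjoining the corresponding relators $(r_0\rho_2)^a$, $(r_2\rho_0)^c$ to the standard presentation of $\Delta$ yields $H_{Rect}$. If $b > 0$, the bounds $0 < b < a$ together with $a \mid 2b$ force $a = 2b$, producing the rhombic lattice $N_T = \langle (R_0P_2)^{2b},\; (R_0P_2)^b(R_2P_0)^c \rangle$ and the presentation $H_{Rhomb}$. I expect the only nontrivial technical obstacle to be the normality computation — carefully extracting the linear action of each of $R_0, R_2, P_0, P_2$ on $T$ from the defining relations of $\Delta$; everything after that is a bookkeeping exercise in rewriting the lattice generators as word relators.
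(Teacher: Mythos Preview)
Your argument is correct and follows essentially the same route as the paper: identify the translation lattice $T=\langle R_0P_2,\,R_2P_0\rangle$, put $N_T$ into Hermite-type echelon form with generators $(R_0P_2)^a$ and $(R_0P_2)^b(R_2P_0)^c$, and then test normality by conjugating by the four reflections, obtaining the divisibility condition $a\mid 2b$ which forces $b=0$ (rectangular) or $a=2b$ (rhombic). Your treatment is in fact slightly more explicit than the paper's, in that you spell out the linear action of each generator on $T$ and observe that the second normality condition $(R_2P_0)^{2c}\in N_T$ is equivalent to the first rather than an independent constraint.
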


These maps are fully regular if and only if the lattice is in fact a square lattice, that is, in the case of $H_{Rect}$ when $a=c$, and in the case of $H_{Rhomb}$ when $b=c$. Informally, this can be observed by looking at Figure \ref{fig12} and noting the need for all closed straight-ahead walks to have the same length in a fully regular map. More formally, the map is fully regular if and only if $\psi : r_i \leftrightarrow \rho_i$ for $i \in \{ 0,2 \}$ is an automorphism, and inspection of the presentations of the groups quickly yields the same necessary and sufficient condition.

\begin{figure}
\centering
 \begin{tikzpicture}
 
\fill[gray, opacity=0.1] (0,0) rectangle(4,3);
 
  \foreach \x in {0,...,4}
    \draw [dashed, line width=2pt] (\x,0) -- (\x,3);
      \foreach \y in {0,...,3}
    \draw [line width=2pt] (0,\y) -- (4,\y);

\draw [->>, line width=2pt] (0,0)--(2,0);
\draw [->>, line width=2pt] (0,3)--(2,3);

\draw [->, dashed, line width=2pt] (0,0)--(0,1.5);
\draw [->, dashed, line width=2pt] (4,0)--(4,1.5);
  
 \end{tikzpicture}
 \hspace{2cm}
 \begin{tikzpicture}

 \begin{scope}
 \clip (0,0) -- (2,-3) -- (4,0) -- (2,3) -- (0,0);
\fill[gray, opacity=0.1] (0,0) -- (2,-3) -- (4,0) -- (2,3) -- (0,0);
 \foreach \x in {0,...,4}
    \draw [dashed, line width=2pt] (\x,-3) -- (\x,3);
      \foreach \y in {-2,...,2}
    \draw [line width=2pt] (0,\y) -- (4,\y);
\end{scope} 
 
 \draw [->>, dotted, line width=1pt] (0,0)--(1,-3/2);
  \draw [ dotted, line width=1pt] (1,-3/2)--(2,-3);
\draw [->>, dotted, line width=1pt] (2,3)--(3,3/2);
\draw [ dotted, line width=1pt] (4,0)--(3,3/2);

\draw [dotted, line width=1pt] (0,0)--(2,3);
\draw [dotted, line width=1pt] (2,-3)--(4,0);
\draw [->, dotted, line width=1pt] (0,0)--(1,3/2);
\draw [->, dotted, line width=1pt] (2,-3)--(3,-3/2);
           
 \end{tikzpicture}
\caption{Rectangular and rhombic toroidal edge-biregular maps for $a=4, b=2, c=3$}
\label{fig12}

 \end{figure}
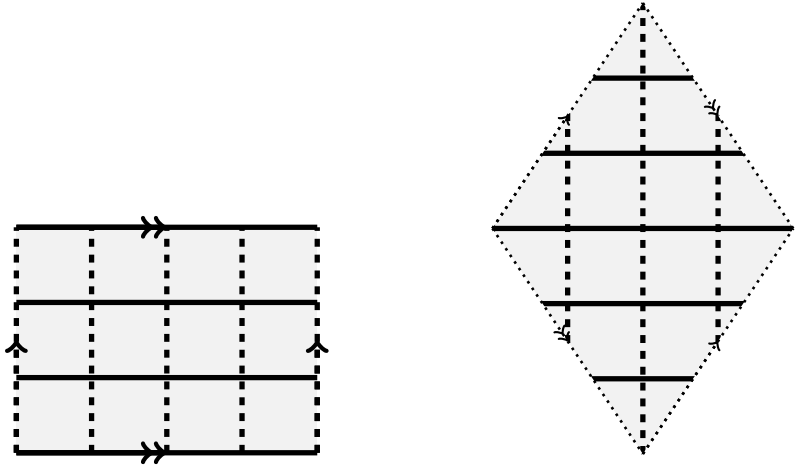

\subsubsection{Edge-biregular maps on the Klein bottle}\label{classKlein}

It is known that the Klein bottle supports no regular maps, so it follows that any edge-biregular map on this surface cannot have any semi-edges.

Since the Klein bottle is also Euclidean, an edge-biregular map on this surface is determined by forming the quotient of the infinite group $\Delta$ by a finite-index normal subgroup which we denote $N_K$.

The fundamental region for a Klein bottle is determined by a glide reflection and a translation in the direction perpendicular to the axis of reflection. See Coxeter and Moser, pp 40-43 in \cite{CM}, or for further details of all uniform maps on this surface, see Wilson \cite{W3}. For an edge-biregular map, the grid must be mapped to itself by the glide reflection, maintaining the same colouring of edges. Thus the axis of reflection must be orthogonal to the square grid, and up to duality and twinness, we may assume the glide reflection is 
$(R_2P_0)^a  R_0$
and the translation is
$(R_0P_2)^b$
where $a$ and $b$ are positive integers and as small as possible.

Suppose $N_K = \langle (R_2P_0)^a  R_0, (R_0P_2)^b \rangle$ is normal in $\Delta$, so that the quotient of $\Delta$ by $N_K$ will define an edge-biregular map on the Klein bottle. Conjugates of the translation are certainly in $N_K$ whereas $((R_2P_0)^a  R_0)^{P_2} = (R_2P_0)^a P_2R_0P_2 \in N_K$ if and only if $(R_0P_2)^2 \in N_K$, that is, if and only if $b|2$.

This is summarised in the following proposition, with examples shown in Figure \ref{fig13}.

\begin{prop}
Edge biregular maps on the Klein bottle are defined by the group $H$ with presentation:
$$H = \langle r_0, r_2, \rho_0, \rho_2  \; |  \; r_0^2, r_2^2, \rho_0^2, \rho_2^2, (r_0 r_2)^2, (\rho_0 \rho_2)^2, (r_0 \rho_0)^2, (r_2 \rho_2)^2, (r_2\rho_0)^a r_0, (r_0\rho_2)^b \rangle $$
\noindent
where $a$ is a positive integer and $b \in \{ 1,2 \}$.
\end{prop}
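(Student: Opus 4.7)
The plan is to determine exactly when the candidate subgroup $N_K = \langle g_1, g_2\rangle \subseteq \Delta$ is normal, where $g_1 = (R_2P_0)^a R_0$ is the glide reflection and $g_2 = (R_0P_2)^b$ is the transverse translation identified in the paragraphs just above the proposition. Normality reduces to checking that conjugation of each of $g_1, g_2$ by each of the four generators $R_0, R_2, P_0, P_2$ of $\Delta$ lands back in $N_K$, since these generators generate all of $\Delta$.

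To organise the computation I would introduce abbreviations $s := R_2P_0$ and $t := R_0P_2$ for the two basic translations and first record the commutation pattern forced by the defining relators of $\Delta$: each of $R_0, P_2$ commutes with $s$ and inverts $t$ by conjugation, while each of $R_2, P_0$ commutes with $t$ and inverts $s$; in addition, $s$ and $t$ themselves commute. From $R_0 s = sR_0$ I would obtain $g_1^2 = s^{2a}$, confirming that $g_1$ behaves as a glide reflection with squared translation $s^{2a}$. Conjugation of $g_2 = t^b$ is then immediate: each of the four generators sends it to $t^{\pm b} \in \langle g_2\rangle \subseteq N_K$. For $g_1$, conjugation by $R_0$ fixes it, and conjugation by either of $R_2, P_0$ sends it to $s^{-a}R_0 = g_1^{-1}$. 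The only conjugate whose presence in $N_K$ is not automatic is $P_2 g_1 P_2$, which I would simplify using $R_0 P_2 = t$ and $R_0 t R_0 = t^{-1}$ to the form $P_2 g_1 P_2 = s^a (P_2 R_0 P_2) = s^a R_0 \, t^2 = g_1 \, t^2$.

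The main obstacle is thus to characterise when $t^2 \in N_K$, because this is equivalent to $P_2 g_1 P_2 \in N_K$ and hence to normality. To handle this I would exhibit the orientation-preserving subgroup $N_K^+ \subseteq N_K$ as the kernel of the parity homomorphism $N_K \to \mathbb{Z}/2\mathbb{Z}$ that counts $g_1$'s modulo two. Using the relation $g_1 g_2 g_1^{-1} = t^{-b} = g_2^{-1}$, which follows from $R_0 t R_0 = t^{-1}$ together with the fact that $s$ commutes with $t$, I would show $N_K^+ = \langle g_1^2, g_2\rangle = \langle s^{2a}, t^b\rangle$, a rank-two lattice of translations in $\Delta$. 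Since $s$ and $t$ span independent translation directions, $t^2$ lies in this lattice exactly when $b \mid 2$, i.e.\ $b \in \{1,2\}$, with no constraint imposed on the positive integer $a$. Combining this with the routine conjugacy checks completes the verification that $N_K \trianglelefteq \Delta$ precisely for the pairs $(a,b)$ claimed, yielding the stated presentation of $H = \Delta/N_K$.
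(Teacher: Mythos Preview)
Your proposal is correct and follows essentially the same route as the paper: reduce the question to normality of $N_K=\langle (R_2P_0)^aR_0,\,(R_0P_2)^b\rangle$ in $\Delta$, check conjugates of the two generators by $R_0,R_2,P_0,P_2$, and observe that the only nontrivial condition comes from $P_2$-conjugation of the glide reflection, which forces $(R_0P_2)^2\in N_K$. The paper's argument is terse at exactly the point where you are careful---it asserts ``$(R_0P_2)^2\in N_K$ if and only if $b\mid 2$'' without justification---whereas your identification of the orientation-preserving subgroup $N_K^+=\langle s^{2a},t^b\rangle$ via the relation $g_1g_2g_1^{-1}=g_2^{-1}$ supplies the missing step cleanly.
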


\begin{figure}
\centering
 \begin{tikzpicture}
 
\fill[gray, opacity=0.1] (0,0) rectangle(5,1);
 
  \foreach \x in {0,...,5}
    \draw [line width=2pt] (\x,0) -- (\x,1);
      \foreach \y in {0,1}
    \draw [dashed, line width=2pt] (0,\y) -- (5,\y);

\draw [->>, dashed,  line width=2pt] (0,0)--(2.5,0);
\draw [->>, dashed,  line width=2pt] (0,1)--(2.5,1);

\draw [->, line width=2pt] (0,0)--(0,0.5);
\draw [->, line width=2pt] (5,1)--(5,0.5);
  
 \end{tikzpicture}
 \hspace{2cm}
 \begin{tikzpicture} 

\fill[gray, opacity=0.1] (0,0) rectangle(5,2);
 
  \foreach \x in {0,...,5}
    \draw [line width=2pt] (\x,0) -- (\x,2);
      \foreach \y in {0,1,2}
    \draw [dashed, line width=2pt] (0,\y) -- (5,\y);

\draw [->>, dashed,  line width=2pt] (0,0)--(2.5,0);
\draw [->>, dashed,  line width=2pt] (0,2)--(2.5,2);

\draw [->, line width=2pt] (0,0)--(0,1);
\draw [->, line width=2pt] (5,2)--(5,1);

 \end{tikzpicture}
\caption{Edge-biregular maps on the Klein bottle for $a=5$ where $b=1$ and $b=2$.}
\label{fig13}

 \end{figure}
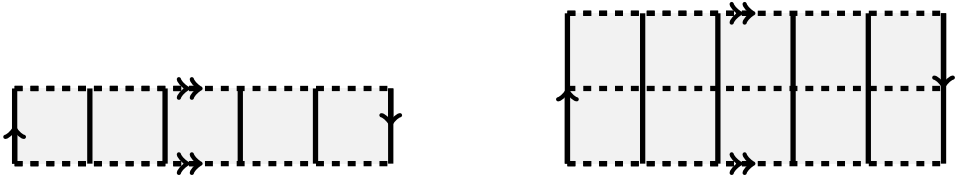

\subsubsection{Edge-biregular maps on the sphere and the projective plane}

\medskip

We will consider edge-biregular maps on these surfaces according to the number of orbits of semi-edges.

As we have seen earlier, any semistar of even valency can be embedded in the sphere to form an edge-biregular map.

An edge-biregular map with exactly one semi-edge orbit must come from a fully regular map by construction \ref{constr3}. It is well-known that the sphere supports fully regular maps of type $(3,3)$, $(3,4)$, $(4,3)$, $(3,5)$, $(5,3)$, $(2,m)$ and $(m,2)$ and so, by attaching a semi-edge at each corner, there are edge-biregular maps on the sphere of types $(6,6)$, $(6,8)$, $(8,6)$, $(6,10)$, $(10,6)$, $(4,2m)$ and $(2m,4)$. Similarly the projective plane supports fully regular maps of type $(3,4)$, $(4,3)$, $(3,5)$, $(5,3)$, $(2,m)$ and $(m,2)$ so on this surface there will be edge-biregular maps of types $(6,8)$, $(8,6)$, $(6,10)$, $(10,6)$, $(4,2m)$ and $(2m,4)$.

Finally, if an edge-biregular map contains no semi-edges, then (as now both the valency and the face length must be even) Euler's formula implies that on a sphere or a projective plane proper edge-biregular maps can only be embedded even cycles and their duals, of types $(2,2m)$ and $(2m,2)$ respectively.

\bigskip

\section{Classification of proper edge-biregular maps when $H$ is dihedral and $\chi < 0 $}

A full classification of edge-biregular maps for surfaces of Euler characteristic $\chi = -p$ where $p$ is prime will appear in a forthcoming paper by \v{S}ir\'{a}\v{n} and the author. Here we will conclude with a ``dihedral classification''. Henceforth we will be focussing on surfaces with negative Euler characteristic and so we may assume that $k \geq 4$ and $l \geq 4$.

\smallskip

Dihedral groups can be generated by two involutions, so it seems natural that some of our groups $H$ will themselves be dihedral. The following working gives a classification, up to twinness and duality, of edge-biregular maps on surfaces for which $\chi <0$ when the group $H = \langle r_0, r_2, \rho_0 , \rho_2  \rangle \cong D_{2m}$.

Since we know $V_4 \cong \langle r_0, r_2 \rangle \leq H$ (because there are no semi-edges) it is clear that $m$ is even. Therefore $H$ has a central involution, which we will call $z$. We also know that $V_4 \cong \langle \rho_0, \rho_2 \rangle \leq H$. Having dealt with special cases in the previous section, an assumption we can now make is that the generators of $H$, namely $r_0, r_2, \rho_0,$ and $\rho_2$, are distinct. For a dihedral group $D_{2m}$ to have at least four distinct involutions, we must have $m\geq 4$, which forces the group to be non-abelian. Also, note that every copy of $V_4$ as a proper subgroup in a dihedral group contains the unique central involution $z$ and, taking account of the generators being distinct, this means $z \in \{ r_0r_2, \rho_0\rho_2 \}$.

By our choice of colouring for the edge orbits, that is up to twinness, we may assume $z=r_0r_2$. We also have $z \in \{\rho_0, \rho_2, \rho_0\rho_2 \}$ which gives options which we address in turn.

\begin{enumerate}
\item
Suppose $z=\rho_0\rho_2$.

In this case $r_0r_2=\rho_0\rho_2$ and so $\rho_0 r_0=\rho_2 r_2$ which forces $k=l$. Also $\rho_2=\rho_0r_0r_2$ so $H = \langle \rho_0,r_0,r_2 \rangle = \langle \rho_0,r_0,r_0r_2 \rangle$. But we know that $\langle \rho_0,r_0 \rangle = D_l$ and so this leaves us with two possibilities:

Firstly, if $z=r_0r_2 \in D_l$ then $H=D_l = \langle \rho_0,r_0 \rangle \cong D_{2m}$ and the type is $(2m,2m)$. Notice, this map has just one face and one vertex, and is supported by a surface with Euler characteristic $\chi = 2-m$. By the fact the map has a single vertex, we also have $H= \langle r_2,\rho_2 \rangle$;

Second, if $z=r_0r_2 \notin D_l$ then, since $z$ is central, we get $H \cong D_l \times \langle z \rangle \cong D_{2m}$. By the uniqueness of the central involutory element, this can only happen if $D_l$ has trivial centre, that is if the order of $\rho_0r_0$ is odd, which happens when $\frac{l}{2}=\frac{m}{2}$ is odd. By considering the order of $(\rho_0r_0)z$ we see that $H= \langle \rho_0, r_0 \rangle  \langle z \rangle = \langle \rho_0, r_0z \rangle = \langle \rho_0, r_2 \rangle$. We also get, by remembering $\rho_2=\rho_0z$, that $H= \langle r_0, \rho_2 \rangle$. These presentations yield a map of type $(m,m)$ which thus has two faces and two vertices, supported by a surface where $\chi = 4-m$.

\item
Up to duality, the other case is when $z=\rho_0$. Now suppose $z=\rho_0$.

In this case we have $r_0r_2=\rho_0$ so $r_0\rho_0=r_2$ which is an involution. Thus $l=4$. Also we get $H = \langle \rho_2, r_2, r_0r_2 \rangle$. By similar reasoning to above this gives us two possibilities:

Firstly, if $z \in D_k$ we have $H = D_k = \langle \rho_2, r_2 \rangle \cong D_{2m}$, the map has a single vertex with quadrilteral faces, it is of type $(2m,4)$ and exists on a surface where $\chi=\frac{1}{2}(2-m)$;

Second, by the centrality of $z=r_0r_2 \notin D_k$ we have $H = D_k \times \langle z \rangle = \langle r_2,\rho_2 \rangle \times \langle z \rangle$. Also, by our assumption, $H \cong D_{2m}$ and so the order of $r_2\rho_2$ must be odd and $H = \langle zr_2,\rho_2 \rangle = \langle r_0, \rho_2 \rangle$. The map is of type $(m,4)$ where $\frac{m}{2}$ is odd, and thus the map has two vertices. This map occurs on a surface with $\chi=\frac{1}{2}(4-m)$.

Note that these (non-orientable) maps are not regular since regularity would require an automorphism of the group $H$ which swaps $\rho_0$ which is central in $H$ with $r_0$ which is not central.

\end{enumerate}

These results can be tabulated as follows giving a classification, up to duality and twinness, of edge-biregular maps on surfaces for which $\chi <0$ when the group $H$ is dihedral, $H = \langle r_0,r_2,\rho_0,\rho_2 \rangle \cong D_{2m}$ where $m$ is even and $m \geq 4$. For completeness, we note that dihedral edge-biregular maps on the sphere, projective plane, torus and Klein bottle also exist, and they are included in our earlier analysis.

\noindent
\begin{tabular}[\small]{ | c | c | c | c | c ||  c | c | c |}
\hline
& & & & & & & \\ [0.1ex]
Type & $H \cong D_{2m} $ & Relations & Conditions & $\chi <0$ & $V$ & $F$ & Regular\\[0.75ex]
\hline
& & & & & & & \\
$(2m,2m)$ & $\begin{array}{c}
\langle r_0,\rho_0 \rangle \\
\langle r_2,\rho_2 \rangle \end{array}$ & $r_0r_2=\rho_0\rho_2=(r_0\rho_0)^{\frac{m}{2}}=(r_2\rho_2)^{\frac{m}{2}}$ & None & $2-m$ & 1 & 1 & Yes\\
& & & & & & & \\
\hline
& & & & & & & \\
$(m,m)$ & $\begin{array}{c} \langle r_2,\rho_0 \rangle \\ \langle r_0,\rho_2 \rangle\end{array}$& $r_0r_2=\rho_0\rho_2=(r_2\rho_0)^{\frac{m}{2}}=(r_0\rho_2)^{\frac{m}{2}}$ & $\frac{m}{2}$ is odd & $4-m$ & 2 & 2 & Yes\\
& & & & & & & \\ 
\hline
& & & & & & & \\ 
$(2m,4)$ & $\langle r_2,\rho_2 \rangle$ & $r_0r_2=\rho_0=(r_2\rho_2)^{\frac{m}{2}}$ & None& $\frac{1}{2}(2-m)$ & 1 & $\frac{m}{2}$ & No\\
& & & & & & & \\ 
\hline
& & & & & & & \\ 
$(m,4)$ & $\langle r_0,\rho_2 \rangle$ & $r_0r_2=\rho_0=(r_0\rho_2)^{\frac{m}{2}}$ & $\frac{m}{2}$ is odd & $\frac{1}{2}(4-m)$ & 2 & $\frac{m}{2}$ & No \\
& & & & & & & \\ 
\hline
\end{tabular}

\begin{rem} It is worth noting that this classification, along with the earlier work, ensures that there is no closed surface which does not support edge-biregular maps. Specifically, all non-orientable surfaces with negative Euler characteristics support an edge-biregular map of type $(4(1-\chi),4)$, as well as its dual map of type $(4,4(1-\chi))$, and their twins, while the orientable surface with negative Euler characteristic $\chi$ supports an edge-biregular map of type $(2(2-\chi),2(2-\chi))$.
\end{rem}

\section{Concluding remarks}

This paper presents the background of edge-biregular maps, and addresses the existence of these maps on surfaces with boundary components and surfaces with non-negative Euler characteristic as well as, for other closed surfaces, a classification for edge-biregular maps with dihedral colour-preserving automorphism groups.

Edge-biregular maps on surfaces of negative prime Euler characteristic are classified in an upcoming paper by \v{S}ir\'{a}\v{n} and the author. Further research into edge-biregular maps is planned and the author also hopes to investigate the lesser-studied types of maps which arise from other index two subgroups of the full triangle groups.

\bigskip

\noindent{\bf Acknowledgement.} \ The author wishes to thank Jozef \v{S}ir\'{a}\v{n} for many helpful and enjoyable discussions during the preparation of this paper.

\newpage

\end{document}